\numberwithin{equation}{section}
\newcommand{\dcb}{\begin{array}{lll}}
\newcommand{\dce}{\end{array}}
\newtheorem{theo}{Theorem}[section]
\newtheorem{prop}[theo]{Proposition}
\theoremstyle{remark}
\newtheorem{rem}{Remark}
\theoremstyle{definition}
\newtheorem{ex}[theo]{Example}
\newtheorem{ass}[theo]{Assumption}
\def\R{\mathbb{R}}
\def\D{\mathbb{D}}
\def\1{\mathbf{1}}
\def \E{\mathbb{E}}
\def \P{\mathbf{P}}
\def \N{\mathbb{N}}
\def \I{\mathbb{I}}
\def\G{\mathtt{G}}
\def\m{m}
\def\P{\mathbb{P}}
\def\EEE{\mathtt{E}}
\def\R{\mathbb{R}}
\def\D{\mathbb{D}}
\def\1{\mathbf{1}}
\def \E{\mathbb{E}}
\def \P{\mathbf{P}}
\def \N{\mathbb{N}}
\def \I{\mathbb{I}}
\def\G{\mathtt{G}}
\def\m{m}
\def\P{\mathbb{P}}
\def\EEE{\mathtt{E}}
\def\DDD{{\cal D}}
\begin{document}

\title{Spectral condition, hitting times and Nash inequality.}

\author{Eva {\sc L\"ocherbach}\footnote{Universit\'e de Cergy-Pontoise, CNRS UMR 8088, D\'epartement de Math\'ematiques,
95 000 Cergy-Pontoise,  France. E-mail: {\tt 
eva.loecherbach@u-cergy.fr} },
Oleg {\sc Loukianov}\footnote{D\'epartement Informatique, IUT de Fontainebleau, Universit\'e Paris Est, route Hurtault, 77300 Fontainebleau, France. E-mail: {\tt oleg.loukianov@u-pec.fr} },
Dasha {\sc Loukianova}\footnote{D\'epartement de Math\'ematiques, Universit\'e d'Evry-Val d'Essonne, Bd Fran\c{c}ois Mitterrand, 91025 Evry, France. E-mail: {\tt dasha.loukianova@univ-evry.fr}}
}

\maketitle

\def\abstractname{Abstract}

\begin {abstract}
Let  $X$ be a $\mu$-symmetric Hunt process on a {LCCB space} $\mathtt{E}$.  For an open set $\mathtt{G}\subseteq\mathtt{E}$, let $\tau_\mathtt{G}$ be the exit time of $X$ from $\mathtt{G}$ and $A^\mathtt{G}$ be the generator of the process killed when it leaves $\G.$ Let $r:[0,\infty[\to[0,\infty[$ and $R (t) = \int_0^t r(s) ds$.

We give necessary and sufficient conditions for  $\E_{\mu} R (\tau_\G)<\infty$ in terms of the behavior near the origin of the spectral measure of $-A^\G.$ 
When $r(t)=t^l$, $l >  0$, by means of this  condition we derive the Nash inequality for the killed process.

{In the diffusion case this  permits to show that the existence of moments of order $l+1$ for {$\tau_\G$} implies the Nash inequality {of order $p=\frac{l+2}{l+1}$} for the whole process. The {associated} rate of convergence of the semi-group in {$\mathbb{L}^2(\mu)$} is bounded by $t^{-(l+1)}$.}

Finally, we show for {general Hunt processes  that the Nash inequality giving rise to a convergence rate of order $t^{-(l+1)}$ of the semi-group} implies the existence of moments of order $l+1-\varepsilon$ {for $\tau_\G$}, for all $ \varepsilon>0$.
\end{abstract}

{\it Key words} : Recurrence,  Hitting times, Dirichlet form, Nash inequality, Weak Poincar\'e inequality, $\alpha$-mixing, Continuous time Markov processes

{\it MSC 2000}  : 60J25, 60J35, 60J60

\def\abstractname{Abstract}

\section{Introduction}

In the recent literature on convergence rates for continuous time Markov processes, the link between functional inequalities and the integrability of hitting times has regained a new interest.

The most studied case is undoubtedly the exponential one. It is known since Carmona-Klein \cite{CaKl} (1983), that for a very general  Markov process with invariant probability $\mu$ and Dirichlet form $({\cal E},{\cal D}({\cal E}))$ on $\mathbb{L}^2(\mu)$, the Poincar\'e inequality 
\begin{equation*}
\mu(f^2)\leq C_P{\cal E}(f,f),\quad f\in{\cal D}({\cal E}),\quad \mu(f)=0, 
\end{equation*}
implies the exponential $\mu$-integrability of hitting times of open sets. The converse implication for reversible diffusions can be deduced from the Down-Meyn-Tweedie work \cite{DMT} (1995) on exponential convergence to equilibrium. In the particular case of  linear diffusions, a simple proof of the equivalence between Poincar{\'e} inequality and exponential integrability of hitting times, with explicit estimations, was given in Loukianov, Loukianova and Song \cite{LLS} (2011). In a recent preprint \cite{CGZ} by Cattiaux, Guillin and Zitt (2011), the authors show that for symmetric hypo-elliptic diffusions in $\R^n$, both are equivalent to the existence of Lyapounov functions.


Although the exponential case, at least for diffusion processes, is now fairly well understood, the sub-exponential case, and in particular the polynomial one, is less studied. To the best of our knowledge, the first work in this direction was done by Mathieu \cite {Mat} (1997). For  a diffusion driven by a polynomially decreasing potential, he gives a bound for the first moment of hitting times and relates this bound to some functional inequality.

More recently, the last chapter of  \cite{CGZ} is devoted to the study of the polynomial case. For uniformly strongly hypo-elliptic symmetric diffusions on $\R^n$,  using  Lyapounov functions, the authors show that for open $U$ the finiteness of polynomial moments of hitting times $v_m(x)=\E_x(T_U^m)$, $m\in\N $, together with a local Poincar{\'e} inequality (see \cite{CaGlyap}) implies the weak Poincar{\'e} inequality 
\begin{equation}\label{eq:wp}
\mu(f^2)\leq \beta(s){\cal E}(f,f)+sOsc(f)^2,\quad s>0, \quad f\in{\cal D}({\cal E}),\quad \mu(f)=0 ,
\end{equation}
with the rate-generating function $\beta$ of the  form 
\begin{equation}\label{eq:beta}
\beta(s)=C\left(\inf\left\{u :  \mu\left(\frac{v_{m-1}}{1+v_m}<u\right)>s\right\}\right)^{-1}.
\end{equation}
 
It is well known since the work of Liggett \cite {Lig} (1991), R\"ockner and Wang \cite{RoWa} (2001) and Wang \cite{Wa} (2003) that the weak Poincar{\'e} inequality~\eqref{eq:wp} gives rise to the $\mathbb{L}^2-$convergence of the semigroup with the speed at least
$$ \xi(t):=\inf\{s>0;\ -(1/2)\beta(s)\log s\leq t\}.$$
When the weak Poincar\'e inequality is deduced as a consequence of the finiteness  of the $m$-th moment $v_m(x)=\E_x(T_U^m)$, one interesting question is the explicit dependence of $\xi(t)$ on $m$.  Unfortunately, the implicit form of $\beta(s)$ in \eqref{eq:beta} makes it difficult to obtain this dependence explicitly.
 

The aim of the present work is to describe more explicitly an inequality which corresponds to the finiteness of polynomial moments of hitting times.

It is known that in the case $\beta(s)=cs^{1-p}$ with  $p>1$ and  some $c>0$, the weak Poincar\'e inequality \eqref{eq:wp} is equivalent to the following Nash inequality of order $p$:
 \begin{equation}\label{eq:n}
 \mu(f^2)\leq C{\cal E}^{1/p}(f,f)\Phi^{1/q}(f), \quad f\in{\cal D}({\cal E}), \quad \mu(f)=0, \quad \frac1p+\frac1q=1,
 \end{equation}
 where $\Phi(f)=Osc(f)$ and $C>0.$ \\
Hence in this paper we concentrate on the study of the Nash inequality. More precisely, we show that the finiteness of polynomial (not necessarily integer) moments of hitting times is related to the  Nash inequality with explicit relation between the order of the moment, the order of the inequality and the speed of  convergence of the semigroup. Let $l > 0.$  Our result can be summarized in the following scheme:
 \begin{equation}\label{eq:scheme}
 \E_{\mu}T_U^{l+1}<\infty\Longrightarrow \mbox{Nash inequality of order $\frac{l+2}{l+1}$ } \\
   \Longrightarrow\ \E_{\mu}T_U^{l+1-\varepsilon}<\infty,
 \end{equation}
 for all $ \varepsilon>0.$
 Moreover it is well known since \cite {Lig} that for symmetric semigroups, the Nash inequality of order $\frac{l+2}{l+1}$ is equivalent to  
$$\mu ((P_t f)^2)\le C \Phi (f){t^{-(l+1)} }, \mu (f) = 0 , f \in \mathbb{L}^2 (\mu) .$$ 
The first implication of (\ref{eq:scheme}) is proved only in the diffusion case, but the second one is valuable for a very general Markov process. The method to prove the first implication relies on the use of killed processes. More precisely, we establish a condition for the existence of general hitting time moments in terms of spectral properties of the killed process. This spectral condition
generalizes the  well known equivalence ``exponential moments $\Longleftrightarrow$ spectral gap''.

Let us now give {the} precise statement of our results. $X$ will be a $\mu$-symmetric Hunt process on a {LCCB space} $\mathtt{E}$ where $\mu$ is a bounded Radon measure (wlog we suppose that $\mu $ is a probability measure). For an open set $\mathtt{G}\subseteq\mathtt{E}$, set
$
\tau_\mathtt{G}=\inf\{t\geq 
0: X_t\notin \mathtt{G}\}
$
the exit time of $X$ from $\mathtt{G}$ and put
$
P^\mathtt{G}_t[\mathtt{A}](x)=\mathbb{P}_x[X_t\in \mathtt{A};t<\tau_\mathtt{G}]
$
for {a} measurable subset $\mathtt{A}$ of $\mathtt{E}$.
Denote $A^\mathtt{G}$  the infinitesimal generator of $(P_t^\mathtt{G})$ in $\mathbb{L}^2(\mathbb{I}_\mathtt{G}\cdot \mu (dx))$
and let $(E_\xi^{\G},\xi\geq 0)$ be its spectral family.
 
It is known, see e.g. Friedman \cite {Fri} (1973) or Loukianova, Loukianov and Song \cite {LLS} (2011), that  $\E_{\mu}\exp(\lambda\tau_\G)<\infty$; $\lambda<\lambda_0,$ is equivalent to the fact that {  $-A^\G$ has a spectral gap of width at least equal to $\lambda_0.$ It turns out that hitting time moments generated by other functions than the exponential ones are still related with the spectral properties of $-A^\G$ in the following sense: 
Let $r:[0,\infty[\to[0,\infty[ ,$ $R(t)=\int_0^tr(s)ds$
and denote by
$\Lambda_r:[0,\infty[\to[0,\infty]$ the Laplace transform of $r$:
\begin{equation}\label{firstspecond}
\forall \xi \geq 0,\quad\Lambda_r(\xi)=\int_0^{\infty}r(t)e^{-\xi t}dt.
\end{equation}
We show in Theorem \ref{theo: spectralcondition} that $\E_{\mu}R(\tau_\G)<\infty$ if and only if the spectral measure of $-A^\G$ integrates $\Lambda_r:$  
$$\forall f:\G\to\R,\   \|f\|_{\infty}<\infty,\quad\int_{[0,\infty[}\Lambda_r(\xi)d(E_{\xi}^{\G}f,f)<\infty.$$ 
This condition on the spectral measure will be called in the sequel the {\it $r$-spectral condition. } Then we show how we can derive in a very elementary way the Nash inequality for the killed process $X^\G$ with the help of the spectral condition specified by $r(t)=t^l$ (Proposition \ref{killednash}). In this case the corresponding  rate of transience of the killed process, i.e. the rate of convergence of $P_t^\G$ to zero, is given by $t^{-(l+1)}$.  All this is the content of Section \ref{Section2}, which is entirely devoted to the study of the killed process.

In Section $3$ we address the question how the polynomial spectral condition for the killed process (equivalently the existence of polynomial moments of hitting times) can be used to derive the Nash inequality for the non-killed process.
In this section, our method applies only in the case when the Dirichlet form is local, i.e. in the diffusion case, in the sense that $X$ has a.s. continuous trajectories.
But we do not need to suppose that the process is driven by a stochastic differential equation. 

In the one-dimensional case, from the existence of polynomial moments of order $l+1 > 1$, we derive the Nash inequality specified by $p=\frac{l+2}{l+1}$  without any further assumptions. 

The multidimensional diffusion case is treated as well. Here we need  an additional non-degeneracy condition on the diffusion:  like in \cite{CGZ}, we have to suppose that a local Poincar\'e inequality on some small domain holds, see Remark {\ref{tobecited}.}  At the end of this section we provide the example of a multidimensional diffusion verifying H\" ormander's condition for which our result  holds.

Finally, in Section $4$ we study the implication ``Nash inequality $\Longrightarrow$ polynomial moments''.  The Nash inequality gives an explicit $\alpha$-mixing rate of the process, and then the main idea is to use this mixing rate in order to obtain a deviation inequality to estimate $\P_\mu(\tau_\G>t).$ This nice idea is borrowed from Cattiaux and Guillin (2008), \cite{CG}. 
As a consequence, Nash inequality of order $p=\frac{l+2}{l+1}$ implies the existence of the polynomial moments of hitting times of order $l+1-\varepsilon$, for any $\varepsilon>0$. Note also that this last section is valuable for general Hunt processes. 
     
\section {Killed process.}\label{Section2}

\subsection {Modulated moments and spectral condition for the killed process.  }\label{sec:moments-condition}

Consider a Hunt process $X$ on a {LCCB space} $\mathtt{E}$ in the sense of Fukushima, Oshima, Takeda ~{(1994)}, \cite{FOT}. Let $\mu$ be a Radon measure on $\mathtt{E}$. Suppose that $\mu$ is bounded (wlog $\mu$ is supposed to be a probability measure) and that $X$ is a $\mu$-symmetric process. 
Let $(P_t)_{t\geq 0}$ be the transition semigroup of $X$. Denote by $\mathbb{P}_x$ the law of the process $X$ issued from $x\in\mathtt{E}$.

For an open set $\mathtt{G}\subseteq\mathtt{E}$, set
$$
\tau_\mathtt{G}=\inf\{t\geq 
0: X_t\notin \mathtt{G}\}
$$
the exit time of $X$ from $\mathtt{G}$. All the long of this section we suppose $\tau_\G<\infty$ almost surely. Introduce 
$$
P^\mathtt{G}_t[\mathtt{A}](x)=\mathbb{P}_x[X_t\in \mathtt{A};t<\tau_\mathtt{G}]
$$
for {a} measurable subset $\mathtt{A}$ of $\mathtt{E}$, and set
$$
X^\G_t=\left\{
\dcb
X_t,&&0\leq t <\tau_\mathtt{G}\\
\Delta && t\geq \tau_\mathtt{G} .
\dce
\right.  
$$
Then, according to~\cite{FOT}, $X^\G$ is a Hunt process on the state space $\mathtt{G}\cup\Delta$, symmetric with respect to the measure $\mathbb{I}_\mathtt{G}\cdot \mu (dx),$ with transition semi-group $(P_t^\mathtt{G})_{t \geq 0}$. If $A^\mathtt{G}$ denotes the infinitesimal generator of $(P_t^\mathtt{G})$ in $\mathbb{L}^2(\mathbb{I}_\mathtt{G}\cdot \mu(dx))$, $A^\mathtt{G}$ is a self-adjoint negative operator. Let us denote by $(\cdot,\cdot)$ the scalar product in $\mathbb{L}^2(\mathbb{I}_\mathtt{G}\cdot \mu (dx))$ and by $(E_\xi^{\G},\xi\geq 0)$ the spectral family of $-A^\mathtt{G}$. 

Recall now the basic properties of the spectral decomposition.
$(E^{\G}_\xi,\xi\geq 0)$ is a right-continuous and increasing family of projection {operators} such that for any bounded and continuous {function $f$ defined on} $[0,\infty[, $ $f(-A^\G)$ is given by
\[f(-A^\G)u=\int_{[0,\infty[}f(\xi)\,d E_\xi^{\G} u,\quad u\in\mathbb{L}^2(\mathbb{I}_\mathtt{G}\cdot \mu (dx)).\]
In particular,
\[Id\  u=\int_{[0,\infty[}\,d E_\xi^{\G} u, \quad\quad-A^\G u=\int_{[0,\infty[}\xi\,d E_\xi^{\G} u \] and

\[P_t^\G u=\exp({t A^\G}) u=\int_{[0,\infty[}e^{-\xi t}\,d E_\xi^{\G} u .\]
Recall that for all {$ u,v\in \mathbb{L}^2(\mathbb{I}_\mathtt{G}\cdot \mu (dx)),$}
\[(f(-A^\G)u,g(-A^\G)v)=\int_{[0,\infty[}f(\xi)g(\xi)\,d(E_\xi^{\G} u,v).\]
Actually,  the bounded variation  function $\xi\to(E_\xi^\G u,u)$ is only increasing on the spectrum of $-A^\G$ and its discontinuity points are eigenvalues   
  of $-A^\G.$
Denote by $\mathcal{E}_\G$ the Dirichlet form associated with $-A^\G$ on $\mathbb{L}^2(\mathbb{I}_\mathtt{G}\cdot \mu (dx))$. We have
$$\mathcal{E}_\G(u,v)=\int_{[0,\infty[}\xi\,d(E_\xi^\G u,v).$$
Let $H^\G_\xi$ be the image space of $E^\G_\xi$.
 \begin{prop}
Under the condition $\tau_\G<\infty$ almost surely, we have $H^{\G}_0=\{0\}.$
 \end{prop}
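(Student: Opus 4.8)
The plan is to identify the subspace $H_0^\G$ analytically and then to rule it out probabilistically using the hypothesis $\tau_\G<\infty$ almost surely. First I would observe that, since $(E_\xi^\G)_{\xi\ge0}$ is an increasing family of projections, any $u\in H_0^\G=\mathrm{Im}(E_0^\G)$ satisfies $E_\xi^\G u=u$ for every $\xi\ge0$, while $E_\xi^\G=0$ for $\xi<0$ as the spectrum lies in $[0,\infty[$. Hence the vector-valued spectral measure $dE_\xi^\G u$ is the point mass $u\,\delta_0$, and feeding this into the spectral representation of the killed semigroup gives
\[
P_t^\G u=\int_{[0,\infty[}e^{-\xi t}\,dE_\xi^\G u=u\qquad\text{for all }t\ge0 .
\]
Thus it suffices to show that the only $u\in\mathbb{L}^2(\I_\G\cdot\mu)$ fixed by the whole semigroup $(P_t^\G)$ is $u=0$. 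In fact I would prove the stronger statement that $\|P_t^\G u\|_{\mathbb{L}^2}\to0$ as $t\to\infty$ for every $u$, which immediately forces $u=0$ whenever $P_t^\G u=u$.

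Next I would establish this $\mathbb{L}^2$-decay first for bounded $u$. For such $u$ the probabilistic expression $P_t^\G u(x)=\E_x[u(X_t)\,\I_{\{t<\tau_\G\}}]$ yields the pointwise bound $|P_t^\G u(x)|\le\|u\|_\infty\,P_t^\G\I_\G(x)$, where $P_t^\G\I_\G(x)=\P_x[t<\tau_\G]$. Here the hypothesis enters: because $\tau_\G<\infty$ $\P_x$-almost surely for $\mu$-a.e. $x$, the events $\{t<\tau_\G\}$ decrease to a null event as $t\to\infty$, so $P_t^\G\I_\G(x)\to0$ for $\mu$-a.e. $x$. Since $0\le P_t^\G\I_\G\le1$ and the constant function $1$ belongs to $\mathbb{L}^2(\I_\G\cdot\mu)$ (recall $\mu$ is a probability measure, so $\mu(\G)<\infty$), dominated convergence gives $\|P_t^\G\I_\G\|_{\mathbb{L}^2}\to0$, and therefore $\|P_t^\G u\|_{\mathbb{L}^2}\le\|u\|_\infty\,\|P_t^\G\I_\G\|_{\mathbb{L}^2}\to0$.

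I would then remove the boundedness restriction by a routine density argument: bounded functions are dense in $\mathbb{L}^2(\I_\G\cdot\mu)$, and since each $P_t^\G$ is an $\mathbb{L}^2$-contraction, for arbitrary $u$ and bounded $v$ one has $\|P_t^\G u\|\le\|u-v\|+\|P_t^\G v\|$, whence $\limsup_{t\to\infty}\|P_t^\G u\|\le\|u-v\|$; letting $v\to u$ gives $\|P_t^\G u\|\to0$ for all $u$. Combining with the first paragraph, any $u\in H_0^\G$ satisfies $\|u\|=\|P_t^\G u\|\to0$, so $u=0$ and $H_0^\G=\{0\}$.

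The genuinely substantive step is the transfer from the almost-sure finiteness of $\tau_\G$ to the pointwise decay of $P_t^\G\I_\G$ and then to $\mathbb{L}^2$-decay; the remainder is spectral bookkeeping together with contraction and density arguments. The only subtlety to verify carefully is that the qualitative hypothesis ``$\tau_\G<\infty$ a.s.'' is used in its $\P_x$-for-$\mu$-a.e.-$x$ form, so that the pointwise limit of $P_t^\G\I_\G$ is valid $\mu$-almost everywhere, which is precisely what the dominated convergence step in $\mathbb{L}^2(\I_\G\cdot\mu)$ requires.
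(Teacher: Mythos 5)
Your proof is correct and rests on exactly the same two facts as the paper's: that $u\in H_0^\G$ is fixed by $P_t^\G$ (via the spectral representation), and that $\tau_\G<\infty$ a.s.\ forces $P_t^\G v\to 0$ for bounded $v$. The only difference is cosmetic: where you upgrade to strong $\mathbb{L}^2$-decay of $P_t^\G u$ for arbitrary $u$ by a contraction-plus-density argument, the paper sidesteps the unboundedness of $u$ by symmetry, writing $(u,v)=(P_t^\G u,v)=(u,P_t^\G v)\to 0$ for bounded $v\ge 0$ and then using density of such $v$; both routes are valid.
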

 \begin{proof}
  $H_0^{\G}$ is invariant under $P^\G_t$ for all $t>0$. Indeed, using $E^{\G}_{\lambda} E^{\G}_0=E^{\G}_0$ $\forall \lambda\geq 0$ we see that $\forall u\in H^{\G}_0,$ $\forall t\geq 0$
 \[P_t^\G u=\int_{[0,\infty[}e^{-\xi t}\,d E_\xi^\G u=e^0E_0^\G u=u .\]
 For all $v\geq 0, $ bounded, $\lim_{t\to\infty}  \,  P_t^\G v=\lim_{t\to\infty}\E[v(X_t)\1 _{t<\tau_\G}]=0$ and hence
 $(u,v)=(P_t^\G u,v)=(u,P_t^\G v)\to 0,\ t\to\infty.$ Positive bounded {functions being} dense in {$\mathbb{L}^2$}, we conclude that $u=0.$
 \end{proof}
Since $E^\G_0=0$, one has $\int_{[0,\infty[}f(\xi)\,dE_\xi^\G u = \int_{]0,\infty[}f(\xi)\,dE_\xi^\G u$, so the integral makes sense even if $f$ is not defined at $0$. 
 
It is known, see \cite {Fri} and \cite {LLS}, that the existence of exponential moments of $\tau_\G$ is equivalent to the fact that {  $-A^\G$ has a spectral gap or, equivalently, $\int_{]0,\lambda_0[}dE_{\xi}^\G=0$ for some $\lambda_0>0.$
It turns out that moments generated by other functions than the exponential ones, are still related with the spectral properties of $-A_\G$. In this section we  give necessary and sufficient conditions for the existence of arbitrary moments of $\tau_\G$ in terms of the behavior near the origin of the spectral measure $dE_\xi^\G$.

Let $r:[0,+\infty[\to[0,+\infty[$ be some measurable non-decreasing function, and denote $\Lambda_r: [0,\infty[\to[0,\infty]$ its Laplace transform:
\begin{equation}\label{eq:lapl}
\forall \xi \geq 0,\quad\Lambda_r(\xi)=\int_0^{\infty}r(t)e^{-\xi t}dt.
\end{equation}
Instead of hitting {time} moments, we consider more generally modulated moments defined by $\int_0^{\tau_\G}r(t)f(X_t)dt.$ Denote by ${\cal{B}}_b$ the space of  Borel-measurable and bounded real functions. 
Let $R(t)=\int_0^tr(s)ds$  and    $\| .\|_1:=\| .\|_{L^1(\I_\G \cdot \mu (dx))}.$

\begin{theo}\label{theo: spectralcondition}
  The following four conditions are equivalent:
\begin{enumerate}
\item 
$\E_{\mu}R({\tau_\G})<\infty;$ 
\item For all $f\in {\cal{B}}_b,$ 
$x\to f(x)\times \E_x\int_0^{\tau_\G}r(t)f(X_t)dt\in L^1(\I_\G \cdot \mu (dx));$
\item For all $f\in {\cal{B}}_b,$ 
$\quad\int_{[0,\infty[}\Lambda_r(\xi)d(E_{\xi}^\G f,f)<\infty;$

\item  For all $f\in {\cal{B}}_b,$ 
$\quad\int_0^\infty r(t) \|P_{t/2}^\G f\|^2\, \,dt<\infty.$

\end{enumerate}
Moreover, {for any  $f \in {\cal{B}}_b$,}
\begin{equation}\label{eq:equality}
\|f \times \E_.\int_0^{\tau_\G}r(t)f(X_t)dt\|_{1}=
\int_{[0,\infty[}\Lambda_r(\xi)d(E_{\xi}^\G f,f)=\int_0^\infty r(t) \|P_{t/2}^\G f\|^2\, \,dt .
\end{equation}
\end{theo}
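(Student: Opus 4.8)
The plan is to reduce all four statements to a single master identity linking the probabilistic modulated moment, the semigroup integral and the spectral integral, and then to run a short cycle of implications through condition (1). For $f\in{\cal B}_b$ write $g_f(x):=\E_x\int_0^{\tau_\G}r(t)f(X_t)\,dt$. First I would interchange expectation and time integral: since $P_t^\G f(x)=\E_x[f(X_t)\1_{t<\tau_\G}]$ by definition and $\int_0^{\tau_\G}(\cdot)\,dt=\int_0^\infty(\cdot)\1_{t<\tau_\G}\,dt$, Tonelli (applied to $f\ge 0$, or to $|f|$) gives $g_f(x)=\int_0^\infty r(t)P_t^\G f(x)\,dt$. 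Pairing with $f$ and integrating over $\G$ against $\mu$, a second Fubini yields $\int_\G f\,g_f\,d\mu=\int_0^\infty r(t)\,(f,P_t^\G f)\,dt$. Using the semigroup property $P_t^\G=P_{t/2}^\G P_{t/2}^\G$ together with the self-adjointness of $P_{t/2}^\G$ (the killed process being $\I_\G\!\cdot\!\mu$-symmetric), one has $(f,P_t^\G f)=\|P_{t/2}^\G f\|^2$, which is the middle member of \eqref{eq:equality}.

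For the spectral member I would insert the spectral representation $\|P_{t/2}^\G f\|^2=\int_{[0,\infty[}e^{-\xi t}\,d(E_\xi^\G f,f)$ and integrate in $t$; since $r\ge 0$, $e^{-\xi t}\ge 0$ and $d(E_\xi^\G f,f)$ is a nonnegative measure, Tonelli allows me to swap the two integrations and to recognise the inner $t$-integral as $\Lambda_r(\xi)$, giving $\int_0^\infty r(t)\|P_{t/2}^\G f\|^2\,dt=\int_{[0,\infty[}\Lambda_r(\xi)\,d(E_\xi^\G f,f)$. This already establishes the two right-hand equalities of \eqref{eq:equality} for every $f\in{\cal B}_b$ as an identity in $[0,\infty]$, and in particular proves (3)$\Leftrightarrow$(4) term by term. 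For $f\ge 0$ one has $g_f\ge 0$ by positivity preservation of $P_t^\G$, so $f\,g_f\ge 0$ and the left member of \eqref{eq:equality} is genuinely the $L^1$-norm $\|f\,g_f\|_1$; this is the form in which the left equality is to be read. (For signed $f$ the transparent identity is with the signed integral $\int_\G f\,g_f\,d\mu$, which does not affect the equivalences, since these will be tested only on $|f|$ and on $f\equiv 1$.)

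To close the equivalences I would cycle through (1). For (1)$\Rightarrow$(2) I use the crude pointwise bound $|g_f(x)|\le\|f\|_\infty\,\E_x R(\tau_\G)$, whence $\|f\,g_f\|_1\le\|f\|_\infty^2\,\E_\mu R(\tau_\G)<\infty$. For (2)$\Rightarrow$(1) I take $f\equiv 1$, for which $g_1(x)=\E_x R(\tau_\G)$ and $\|1\cdot g_1\|_1=\E_\mu R(\tau_\G)$. The same test function links (1) and (4): the master identity applied to $f\equiv 1$ gives $\int_0^\infty r(t)\|P_{t/2}^\G 1\|^2\,dt=\int_0^\infty r(t)\,\P_\mu(t<\tau_\G)\,dt=\E_\mu R(\tau_\G)$, so (4), even for the single function $1$, implies (1); conversely, for general $f$ the submarkovian bound $|P_t^\G f|\le P_t^\G|f|$ gives $\|P_{t/2}^\G f\|\le\|P_{t/2}^\G|f|\|$, and the master identity applied to $|f|\ge 0$ bounds $\int_0^\infty r(t)\|P_{t/2}^\G f\|^2\,dt\le\int_0^\infty r(t)\|P_{t/2}^\G|f|\|^2\,dt=\int_\G|f|\,g_{|f|}\,d\mu\le\|f\|_\infty^2\,\E_\mu R(\tau_\G)$, so (1)$\Rightarrow$(4) for all $f$. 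Combined with (3)$\Leftrightarrow$(4) this yields the equivalence of all four conditions.

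The algebra is routine; the real work lies in justifying the interchanges. The main obstacle I expect is making the two Fubini/Tonelli steps and the exchange with the spectral measure fully rigorous in the absence of a priori integrability: the clean remedy is to perform every swap first for nonnegative integrands (for $f\ge 0$, or after passing to $|f|$), where Tonelli applies unconditionally, and only afterwards to specialise to signed $f$ in the finite regime. A secondary point deserving care is the measurability of $x\mapsto g_f(x)$ and its identification with a version of $\int_0^\infty r(t)P_t^\G f(x)\,dt$, together with the fact that $\tau_\G=0$ (hence $R(\tau_\G)=0$) off $\G$, so that $\E_\mu R(\tau_\G)=\int_\G \E_x R(\tau_\G)\,\mu(dx)$; both are standard in the Hunt/Dirichlet-form setting of \cite{FOT}. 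Finally, since $E_0^\G=0$ by the preceding proposition, the spectral integrals effectively run over $]0,\infty[$, so a possible blow-up of $\Lambda_r$ at the origin is harmless.
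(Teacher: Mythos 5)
Your proposal is correct and follows essentially the same route as the paper: the same Fubini/Tonelli--spectral-calculus chain $\int_\G f\,g_f\,d\mu=\int_0^\infty r(t)\|P_{t/2}^\G f\|^2\,dt=\int_{[0,\infty[}\Lambda_r(\xi)\,d(E_\xi^\G f,f)$ established first for nonnegative $f$, combined with the test function $f\equiv 1$ and the crude bound $|g_f|\le\|f\|_\infty\,\E_\cdot R(\tau_\G)$ for the remaining implications. Your side remark that for signed $f$ the left member of \eqref{eq:equality} should be read as the signed integral $\int_\G f\,g_f\,d\mu$ rather than the $\mathbb{L}^1$-norm is a legitimate (and welcome) precision that the paper's own proof glosses over.
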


\begin{rem}
In the sequel  the  condition $3.$ of Theorem \ref {theo: spectralcondition}  will be called the{ \bf{\it $r$- spectral condition {for the killed process.}}}
\end{rem}


\begin{proof}
The equivalence $1\iff 2$ is obvious.
The following calculus yields $2. \iff 3. \iff 4. $ and the equality \eqref{eq:equality} for positive bounded functions.
\begin{multline*}
\left(f,\E _{.} \int_0^{\tau_\G} r(t)f(X_t)\,dt\right ) = \left (f,\int_0^\infty r(t) P_t^\G f(.)\,dt\right ) = \int_0^\infty r(t)\left (f,P_t^\G f\right)\, \,dt\\
=\int_0^\infty r(t) \|P_{t/2}^\G f\|^2\, \,dt
=\int_0^\infty r(t)\left (f,\int_{[0,\infty[}e^{-\xi t}dE_\xi^\G f\right)\, \,dt=\\
=\int_0^\infty r(t)\int_{[0,\infty[} e^{-\xi t}\, d\left (E_\xi^\G f,f\right)\, \,dt
= \int_{[0,\infty[} \int_0^\infty r(t) e^{-\xi t}\,dt\,d\left (E_\xi^\G f,f\right) =\\=\int_{[0,\infty[}\Lambda_{r}(\xi)\,d\left(E_\xi^\G f,f\right). 
\end{multline*}
If $\E_{\mu} R({\tau_\G})<\infty$ and $f$ is bounded, to show the equalities of \eqref{eq:equality} we use
$$\E_x\int_0^{\infty} r(t)|f(X_t)|\1_{(t<\tau_{\G})}dt\leq \|f\|_{\infty}\int_0^{\infty}r(t)\P_x(\tau_{\G}>t)dt=\|f\|_{\infty}\E_xR(\tau_{\G}),$$
the last expectation being finite for $\mu $-almost all $x\in \G.$
\end{proof}
\begin{ex}
Consider the case $r(t)=t^l,\quad l >  0$.
 We have for $\xi \geq 0$
$\Lambda_r(\xi)=\Gamma(l+1)\xi^{-(l+1)}.$ Hence 
\begin{equation}\label{eq:polynomspectr}
x\to\E_x\tau_{\G}^{l+1}\in L(\I_\G \cdot \mu (dx))\quad \Longleftrightarrow\quad\int_{[0,\infty[}\xi^{-(l+1)}d(E^\G_{\xi}f,f)<\infty , 
\end{equation}  
for all $f$ non-negative and bounded.  In the next section we will explain how to use the spectral condition to obtain functional inequalities for $X^{\G}$ and then for $X.$ 
\end{ex}
\begin{ex}
Consider the case $r(t)=e^{\lambda t},\quad \lambda >0.$ We have
$$\Lambda_r(\xi)=\frac1{\xi-\lambda},\quad\mbox{ if }\quad\xi >\lambda,\quad \Lambda_r(\xi)=+\infty\quad \mbox{ otherwise. }$$
Put \[\lambda_0=\sup\{\lambda>0,\quad x\to \E_xe^{\lambda\tau_{\G}}\in L^1(\I_\G \cdot  \mu (dx))\} .\]
We obtain that $\lambda_0$ is the infinum of the spectrum of $-A^\G.$ 
\end{ex}

\subsection{  Polynomial spectral condition and Nash inequality for killed process.}

In \cite {Lig} Liggett introduced the following Nash inequality for a Dirichlet form $\mathcal{E}(f,f)$ associated to a linear operator generating a strongly continuous Markovian semigroup with invariant {probability} measure $\mu .$ 
\begin{equation}\label {eq: nash}
\mu ({( f-\mu (f))}^2)\leq C\mathcal{E}^{1/p}( f, f)\Phi^{1/q}( f), \quad   f \in D ( \mathcal{E}). 
\end{equation}
Here $1< p,q < \infty$ with $1/p+1/q=1,$ $C$ is a positive constant, and {$\Phi: \mathbb{L}^2(\mu )\to[0,\infty]$} satisfies $\Phi(cf)=c^2\Phi(f),$ for any $c\in\R$ and {$f\in \mathbb{L}^2(\mu ).$}
It is shown in \cite{Lig} that if in addition $\Phi(P_tf)\leq \Phi( f)$ {$\forall f\in \mathbb{L}^2(\mu),$ $\forall t>0,$} then the inequality \eqref{eq: nash} is equivalent to 
\begin{equation}\label{eq: sgconv}
\exists C>0,\quad\|P_t(f)-\mu(f)\|^2_2\leq C\frac{\Phi (f)}{t^{q-1}}
\end{equation}
for all $ f \in \mathbb{L}^2(\mu)$ and $ t>0.$
If the semi-group of $X$ is conservative, symmetric and ergodic,  $\mu (f)=E_0f.$ 
Hence we will consider {the following form of the Nash inequality:}
\begin{equation}\label{eq:vrainash}
\|f-E_0(f)\|^2\leq C\mathcal{E}^{1/p}( f, f)\Phi^{1/q}( f), \quad   f \in D ( \mathcal{E}). 
\end{equation}   
Let us point out again that for the killed process the semi-group is not conservative, transient, and $E_0=0.$ The following proposition shows that the condition  $\E_{\mu} \tau_\G^{l+1}<\infty$ implies {the} Nash inequality in the form \eqref{eq:vrainash} for the killed process.



\begin{prop}\label{killednash}
Let $l > 0$.
Suppose that $\E_{\mu}\tau_\G^{l+1}<\infty.$
Then the Nash inequality {\eqref{eq:vrainash}}
 holds {for the killed process}  with $p=\frac{l+2}{l+1}$ and $q=l+2$ and 
 $\Phi(f)\leq \|f\|_{\infty}^2\E_{\mu}\tau_\G^{l+1}.$
\end{prop}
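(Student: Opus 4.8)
The plan is to realise the three functionals appearing in \eqref{eq:vrainash} as moments of the single positive measure $\nu_f(d\xi):=d(E_\xi^\G f,f)$ and then to read off the inequality from a one-line H\"older interpolation. Since $E_0^\G=0$ (the Proposition preceding Theorem \ref{theo: spectralcondition}), for $f\in\mathbb{L}^2(\I_\G\cdot\mu)$ the spectral calculus recalled above gives
\[
\|f\|^2=\int_{]0,\infty[}\nu_f(d\xi),\qquad \mathcal{E}_\G(f,f)=\int_{]0,\infty[}\xi\,\nu_f(d\xi),
\]
and the natural candidate for the auxiliary functional is
\[
\Phi(f):=\int_{]0,\infty[}\xi^{-(l+1)}\,\nu_f(d\xi)\in[0,\infty],
\]
which is homogeneous of degree two, $\Phi(cf)=c^2\Phi(f)$, as required in \eqref{eq: nash}.

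First I would establish the inequality itself. With $p=\frac{l+2}{l+1}$ and $q=l+2$ one has $\frac1p+\frac1q=1$ and, crucially, $\frac1p-\frac{l+1}{q}=0$, so the trivial factorisation $\xi^0=\xi^{1/p}\cdot\xi^{-(l+1)/q}$ holds. Applying H\"older's inequality to $\nu_f$ with the conjugate exponents $p$ and $q$ yields
\[
\|f\|^2=\int_{]0,\infty[}\xi^{1/p}\,\xi^{-(l+1)/q}\,\nu_f(d\xi)\le\left(\int_{]0,\infty[}\xi\,\nu_f(d\xi)\right)^{1/p}\left(\int_{]0,\infty[}\xi^{-(l+1)}\,\nu_f(d\xi)\right)^{1/q}=\mathcal{E}_\G(f,f)^{1/p}\,\Phi(f)^{1/q},
\]
which is exactly \eqref{eq:vrainash} for the killed process (recall $E_0=0$), with constant $C=1$; the inequality is trivially true whenever the right-hand side is infinite, e.g. when $f\notin D(\mathcal{E}_\G)$.

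It then remains to control $\Phi$ on bounded functions, and here I would invoke Theorem \ref{theo: spectralcondition} with $r(t)=t^l$, for which $\Lambda_r(\xi)=\Gamma(l+1)\xi^{-(l+1)}$ and $R(t)=t^{l+1}/(l+1)$. The identity obtained in the proof of that theorem reads $\Gamma(l+1)\,\Phi(f)=\int_{[0,\infty[}\Lambda_r(\xi)\,d(E_\xi^\G f,f)=\left(f,\E_{\cdot}\int_0^{\tau_\G}t^l f(X_t)\,dt\right)$, and the crude pointwise bound $f(x)\,\E_x\int_0^{\tau_\G}t^l f(X_t)\,dt\le\|f\|_\infty^2\,\E_x R(\tau_\G)$, integrated against $\I_\G\cdot\mu$, gives
\[
\Gamma(l+1)\,\Phi(f)\le\|f\|_\infty^2\,\E_\mu R(\tau_\G)=\frac{\|f\|_\infty^2\,\E_\mu\tau_\G^{l+1}}{l+1},\qquad\text{so}\qquad\Phi(f)\le\frac{\|f\|_\infty^2\,\E_\mu\tau_\G^{l+1}}{\Gamma(l+2)}\le\|f\|_\infty^2\,\E_\mu\tau_\G^{l+1},
\]
the last step using $\Gamma(l+2)\ge1$ for $l>0$. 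This is the announced bound on $\Phi$; I would also note in passing that $\Phi(P_t^\G f)=\int\xi^{-(l+1)}e^{-2\xi t}\,\nu_f(d\xi)\le\Phi(f)$, so that $\Phi$ fits Liggett's monotonicity hypothesis and the inequality indeed produces the convergence rate $t^{-(l+1)}=t^{-(q-1)}$ of \eqref{eq: sgconv}.

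The only point requiring genuine care — the real content hidden behind the H\"older computation — is the finiteness and well-definedness of $\Phi(f)$, i.e. the integrability of $\xi^{-(l+1)}$ near the origin against $\nu_f$. This is precisely where the hypothesis $\E_\mu\tau_\G^{l+1}<\infty$ enters: combined with $E_0^\G=0$, the polynomial instance \eqref{eq:polynomspectr} of Theorem \ref{theo: spectralcondition} guarantees $\int_{]0,\infty[}\xi^{-(l+1)}\,\nu_f(d\xi)<\infty$ for every bounded $f$, so that the interpolation above is not vacuous and the middle factor $\Phi(f)^{1/q}$ is finite. For unbounded $f\in D(\mathcal{E}_\G)$ the bound on $\Phi$ may fail, but then $\Phi(f)=\infty$ and \eqref{eq:vrainash} holds trivially, so no further argument is needed.
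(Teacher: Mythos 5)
Your proof is correct and follows essentially the same route as the paper: the Hölder interpolation $\xi^{0}=\xi^{1/p}\xi^{-(l+1)/q}$ applied to the spectral measure $d(E_\xi^\G f,f)$, with $\Phi(f)=\int_{]0,\infty[}\xi^{-(l+1)}d(E_\xi^\G f,f)$ and Theorem \ref{theo: spectralcondition} ensuring its finiteness. You in fact go slightly further than the written proof by deriving the explicit bound $\Phi(f)\le \|f\|_\infty^2\,\E_\mu\tau_\G^{l+1}/\Gamma(l+2)$ from the equality \eqref{eq:equality}, which the paper asserts in the statement but does not spell out.
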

\begin{proof}
In virtue of {Theorem \ref{theo: spectralcondition}}, the condition $\E_{\mu}\tau_\G^{l+1}<\infty$ is equivalent to 
$$\int_{[0,\infty[}\frac 1{\xi^{l+1}}d(E_{\xi}^\G f,f)<\infty,$$
for all  bounded $f .$ Let  $f \in D (\mathcal{E}_\G).$  Suppose that $p^{-1}+q^{-1}=1$ and write, using H\"older's inequality:
\begin{multline*}
\|f-E_0 f\|^2=\|f\|^2=\int_{[0,\infty[}d(E_\xi^\G f,f)=\int_{[0,\infty[}\xi^{1/p}\xi^{-1/p} d(E_\xi^\G f,f)\leq\\
\left(\int_{[0,\infty[}\xi d(E_\xi^\G f,f)\right)^{1/p}\times \left(\int_{[0,\infty[}\xi^{-q/p} d(E_\xi^\G f,f)\right)^{1/q}
= \mathcal{E}_\G^{1/p}(f)\Phi^{1/q}(f),
\end{multline*}
where
\[\Phi(f)=\int_{[0,\infty[}\xi^{-q/p} d(E_\xi^\G f,f).\]
Now we choose $p$ and $q$ in such a way that 
$$ \Phi(f)=\int_{[0,\infty[}\xi^{-(l+1)} d(E_\xi^\G f,f).$$

This choice is given by $p=\frac{l+2}{l+1}$ and $q=l+2.$
Finally we obtain for all  $f \in D (\mathcal{E}_\G)$ 
{\begin{equation*}
\|f-E_0 f\|^2\leq
\mathcal{E}_\G(f)^{{(l+1)}/{(l+2)}}\times\Phi^{1/(l+2)}(f) ,
\end {equation*}
}where $\Phi$ satisfies $\Phi(cf)=c^2\Phi(f)$ for any $c\in\R$ 
and $\Phi(f)<\infty$ for  all bounded $f$.
Also
\[\Phi(P_t^\G f)=\int_{[0,\infty[}\xi^{- (l+1) }e^{-2 \xi t} d(E_\xi^\G f,f)\leq \Phi(f).\]

\end{proof}

\section{Polynomial moments and Nash inequality for non-killed process.}\label{Section3}
In this section we show how polynomial modulated moments are related to Nash inequality for the non-killed process. The result can be resumed as follows. For all $ l > 0,$ for all $ \varepsilon >0$ we have: 
``integrability of  moments of order $l+1$ $\Longrightarrow$ Nash inequality giving rise to {$\mathbb{L}^2$} convergence of the semigroup with speed $t^{-(l+1)}$$\Longrightarrow$ existence of  moments of order $l+1-\varepsilon$".   
 
For the second implication we work under the general conditions of Section \ref{Section2}.
For the first implication ``moments imply Nash'' we work in the diffusion case only. In dimension $1,$ no hypothesis on the diffusion is imposed. In higher dimension, however, we need a non-degeneracy condition which is a local Poincar\'e inequality (see the comments in Remark \ref{tobecited}). 
\subsection{Polynomial moments $\Longrightarrow$ Nash inequality. One-dimensional diffusion case.}

{In this subsection we  show that the Nash inequality for a killed diffusion process on $\R$ implies the Nash inequality for the non-killed process. Fix some $a \in \R$ and let $\G^-=]-\infty,a[$, $\G^+=]a,\infty[$. 
We use some well-known techniques 
which are specific to the one-dimensional case.}

Since $X$ is a diffusion, it possesses a scale function $S$ and a corresponding speed measure $m.$ Denote by $dS$ the measure induced by $S(x)$. Let $F(x)$ be a real function on $\R$. We shall write $dF\ll dS$, if there exists a function $f(x)$ in $\mathbb{L}_{loc}^1(dS)$ such that 
$$\int_a^bf(x) dS(x) = F(b)-F(a), \;  \forall a<b.$$
The function $f(x)$ will be denoted $\frac{dF}{dS}(x)$. Introduce then the function spaces
\begin{eqnarray}\label{defF}
\mathcal{F}=\left\{F\in \mbox{{ $\mathbb{L}^2(\m)$}}: \quad dF\ll dS, \ \frac {dF}{dS}\in \mbox{ { $ \mathbb{L}^2(dS)$} }\right\},\\
\mathcal{F}_{]a,\infty[}=\{F\in\mathcal{F}: F(x)=0, \ x\leq a\},\nonumber\\
\mathcal{F}_{]-\infty,a[}=\{F\in\mathcal{F}: F(x)=0, \ x\geq a\} .\nonumber
\end{eqnarray}
We do not assume that $dS$ and $m$ are absolutely continuous with respect to the Lebesgue measure. We cite the following theorem from \cite{LLS}.

\begin{theo}[\cite{LLS}]\label {D-form of X} The diffusion $X$ is $m$-symmetric. The Dirichlet space associated with $X$ is the function space $\mathcal{F}$ given by~\eqref{defF}, and the Dirichlet form has the expression 
\[
\mathcal{E}(F,F)=\int_{-\infty}^\infty \left(\frac{dF}{dS}\right)^2(x)dS(x), \ F\in\mathcal{F}.
\]

The restriction of the Dirichlet form $\mathcal{E}$ on $\mathcal{F}_{]a,\infty[}$ is the Dirichlet form {$\mathcal{E}_{]a,\infty[}$} associated with the semigroup $(P^{]a,\infty[}_t)_{t\geq 0}$ of the process $X$ killed when it exits $]a,\infty[$. The killed process $X^{]a,\infty[}$ is symmetric with respect to the measure $\mathbb{I}_{]a,\infty[}\cdot m(dx)$.

The same is true (with obvious modifications) for {$\mathcal{E}_{]-\infty,a[}$.}
\end{theo}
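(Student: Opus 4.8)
The plan is to recover the Dirichlet form from the infinitesimal description of the diffusion by a Green-type integration by parts, to check directly that the resulting form satisfies the axioms of a regular Dirichlet form on $\mathbb{L}^2(m)$, and then to invoke the uniqueness of the correspondence between regular Dirichlet forms and $m$-symmetric Hunt processes. Recall that a regular one-dimensional diffusion is determined by its scale function $S$ and speed measure $m$, and that its generator acts formally as $L=\frac{d}{dm}\frac{d}{dS}$ on a suitable domain. First I would establish, for $F,G$ smooth enough and compactly supported in the interior, the identity
\[
\int_{\R}(LF)\,G\,dm=\int_{\R}G\,d\Big(\tfrac{dF}{dS}\Big)=-\int_{\R}\frac{dF}{dS}\,\frac{dG}{dS}\,dS,
\]
the boundary contributions vanishing by the support assumption. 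This simultaneously yields the $m$-symmetry of $L$ and identifies $\mathcal{E}(F,G)=\int_{\R}\frac{dF}{dS}\frac{dG}{dS}\,dS$ as the form attached to $L$ on a core.

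The analytic heart is to prove that $(\mathcal{E},\mathcal{F})$ is closed on $\mathbb{L}^2(m)$. Given a sequence that is Cauchy for $\mathcal{E}_1:=\mathcal{E}+\|\cdot\|_{\mathbb{L}^2(m)}^2$, one has $F_n\to F$ in $\mathbb{L}^2(m)$ and $\frac{dF_n}{dS}\to g$ in $\mathbb{L}^2(dS)$; passing to the limit in $F_n(b)-F_n(a)=\int_a^b\frac{dF_n}{dS}\,dS$ gives $dF\ll dS$ with $\frac{dF}{dS}=g$, so $F\in\mathcal{F}$ and $\mathcal{E}_1(F_n-F,F_n-F)\to 0$. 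Since $S$ and $m$ are not assumed absolutely continuous with respect to Lebesgue measure, this step must be carried out throughout with the Radon--Nikodym relation $dF\ll dS$ and local integrability in $\mathbb{L}_{loc}^1(dS)$ rather than with classical derivatives. The Markovian (unit contraction) property is then immediate, since a normal contraction $\bar F$ of $F$ still satisfies $d\bar F\ll dS$ with $|\frac{d\bar F}{dS}|\le|\frac{dF}{dS}|$ $dS$-a.e., whence $\bar F\in\mathcal{F}$ and $\mathcal{E}(\bar F,\bar F)\le\mathcal{E}(F,F)$; regularity follows by truncation and mollification in the $S$-variable. The general theory then attaches to $(\mathcal{E},\mathcal{F})$ a unique $m$-symmetric Hunt process, which I would identify with $X$ by matching its generator with $L=\frac{d}{dm}\frac{d}{dS}$.

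For the killed process I would appeal to the theory of the part of a Dirichlet form on an open set from \cite{FOT}: for $G=\,]a,\infty[$ the form of $X^{]a,\infty[}$ is $(\mathcal{E},\mathcal{F}_G)$ with $\mathcal{F}_G=\overline{\mathcal{F}\cap C_c(G)}^{\,\mathcal{E}_1}=\{F\in\mathcal{F}:F=0\text{ q.e. on }G^c\}$ and the same form expression. It then remains to identify $\mathcal{F}_G$ with the explicit space $\mathcal{F}_{]a,\infty[}=\{F\in\mathcal{F}:F(x)=0,\ x\le a\}$. This uses that every $F\in\mathcal{F}$ has a continuous version, because the Cauchy--Schwarz estimate
\[
|F(b)-F(a)|^2\le \mathcal{E}(F,F)\,\big(S(b)-S(a)\big)
\]
controls its modulus of continuity by $S$, together with the fact that for a regular one-dimensional diffusion every point, in particular the single boundary point $a$, is non-polar; hence ``$F=0$ q.e. on $]-\infty,a]$'' is equivalent to ``$F(x)=0$ for all $x\le a$'', and $\mathcal{F}_G=\mathcal{F}_{]a,\infty[}$. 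The symmetric assertion for $\mathcal{F}_{]-\infty,a[}$ is identical. I expect the main obstacle to be the closedness step combined with the process identification: pushing the $S$--$m$ calculus through without any absolute-continuity hypothesis, and matching the abstractly constructed form-process with the prescribed diffusion, is where the genuine work lies, whereas the killed-process reduction is essentially a citation of the restriction theory of \cite{FOT} once the non-polarity of $a$ is recorded.
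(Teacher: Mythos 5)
The paper does not prove this theorem: it is quoted from \cite{LLS}, and the sentence immediately following the statement says explicitly that the proof is given there. So there is no in-paper argument to measure your proposal against; what follows is an assessment of your outline on its own terms.

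Your route --- define $(\mathcal{E},\mathcal{F})$ directly from $S$, prove closedness and the Markov property, invoke the regular-Dirichlet-form/Hunt-process correspondence, and then pass to the part of the form on $]a,\infty[$ --- is the standard construction and is sound in outline. The killed-process half in particular is handled correctly: the bound $|F(b)-F(a)|^2\le \mathcal{E}(F,F)\,(S(b)-S(a))$ gives continuous versions, points are non-polar for a regular linear diffusion, and therefore the abstract description $\{F\in\mathcal{F}: F=0 \mbox{ q.e.\ on } ]-\infty,a]\}$ of ${\cal D}({\cal E}_{]a,\infty[})$ from \cite{FOT} collapses to the concrete space $\mathcal{F}_{]a,\infty[}$. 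Two places are thinner than they should be. First, ``regularity follows by truncation and mollification'' is not automatic: $\mathcal{E}_1$-density of compactly supported elements in all of $\mathcal{F}$ requires cutoff functions of vanishing energy, which is a statement about the behaviour of $S$ at $\pm\infty$ (elements of $\mathcal{F}$ may grow like $\sqrt{S}$, so cutoffs linear in $S$ over $\{n\le S\le 2n\}$ do \emph{not} have negligible contribution); if a boundary is accessible, the closure of the compactly supported core can be strictly smaller than $\mathcal{F}$, and the equality of the two spaces is part of what must be proved. Second, and relatedly, ``matching the generator with $L=\frac{d}{dm}\frac{d}{dS}$'' conceals the real issue: $L$ on a core may admit several self-adjoint extensions, and one must argue that the generator of the \emph{given} process $X$ is the one attached to $(\mathcal{E},\mathcal{F})$ --- for instance by first establishing $m$-symmetry of $X$ itself (classically, via the symmetry of the transition density with respect to the speed measure), so that $X$ possesses a Dirichlet form, and then identifying that form with $\mathcal{E}$ on a common core. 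As written, your integration by parts gives symmetry of $L$ on test functions, not of the semigroup $(P_t)$. These are precisely the points where the cited reference does the actual work; your sketch names them but does not close them.
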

The proof of this theorem is given in \cite {LLS}.\\
We can now state the Nash inequality for the non-killed process $X.$
For $a\in\R$ introduce the hitting time $T_a=\inf\{t\geq 0, \ X_t=a\}.$  
\begin{theo}\label{theo:nashnonkilled}
Let $l > 0$.  Suppose that for some $a\in\R$ 
\begin{equation}\label{eq:lower}
\int_{-\infty}^{+\infty}\E_x( T_a)^{l+1}  m(dx)<\infty .
\end{equation}
Then {for $ \mu (.) = \frac{1}{m (\R)} m (.),$}  Nash inequality 
$$\mu({( F-\mu(F))}^2)\leq 
\mathcal{E}^{1/p}( F, F)\Phi^{1/q}( F), \quad   F \in D({\cal E}) $$
 holds with $p=\frac{l+2}{l+1},$ $q=l+2 .$ The function {$\Phi : \mathbb{L}^2 (\mu) \to [0, +\infty]  $} satisfies   
 $\Phi(cF  )=c^2\Phi(F)$ for all $c\in\R$ and  $\Phi (P_t F ) = \Phi (F )$ for all $ t>0.$
 Also for all {$ F \in \mathbb{L}^2 (\mu ),$}
\begin{equation}\label{eq:last}
 \Phi (F) \le C \| F - \mu(F) \|_\infty^2 . 
\end{equation}   
\end{theo}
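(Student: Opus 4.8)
The plan is to reduce the statement to the two killed processes on the half-lines $\G^+=]a,\infty[$ and $\G^-=]-\infty,a[$, apply Proposition \ref{killednash} to each, and then glue the two resulting Nash inequalities. The special one-dimensional fact I would use is that a continuous path leaving $\G^+$ (resp. $\G^-$) must pass through $a$, so that under $\P_x$ with $x>a$ one has $\tau_{\G^+}=T_a$, and under $\P_x$ with $x<a$ one has $\tau_{\G^-}=T_a$. Consequently the single hypothesis \eqref{eq:lower} splits: since $T_a=0$ at $x=a$, it gives $\int_{\G^+}\E_x\tau_{\G^+}^{l+1}\,m(dx)<\infty$ and $\int_{\G^-}\E_x\tau_{\G^-}^{l+1}\,m(dx)<\infty$, i.e. $\E_\mu\tau_{\G^\pm}^{l+1}<\infty$ for $\mu=m(\R)^{-1}m$. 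First I would therefore invoke Proposition \ref{killednash} on each half-line, obtaining for the killed forms $\|g\|^2\le \mathcal E_{\G^+}^{1/p}(g)\,\Phi_{\G^+}^{1/q}(g)$ (and the symmetric statement on $\G^-$) for $g$ in the respective killed domain, with $p=\frac{l+2}{l+1}$, $q=l+2$ and the bound $\Phi_{\G^\pm}(g)\le\|g\|_\infty^2\,\E_\mu\tau_{\G^\pm}^{l+1}$.

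Next I would split a test function. Given $F\in D(\mathcal E)=\mathcal F$, set $F_a^+=(F-F(a))\1_{]a,\infty[}$ and $F_a^-=(F-F(a))\1_{]-\infty,a[}$. Since functions of $\mathcal F$ are continuous in dimension one, these belong to $\mathcal F_{]a,\infty[}$ and $\mathcal F_{]-\infty,a[}$ respectively, and $\frac{dF_a^\pm}{dS}=\frac{dF}{dS}$ on the corresponding half-line and $0$ elsewhere. By Theorem \ref{D-form of X} the form is $\int_\R(\frac{dF}{dS})^2\,dS$, which is moreover unchanged when $m$ is rescaled to $\mu$ (the generator scales by $m(\R)$ and the inner product by $m(\R)^{-1}$); hence it splits additively, $\mathcal E(F,F)=\mathcal E_{\G^+}(F_a^+)+\mathcal E_{\G^-}(F_a^-)$, the single point $\{a\}$ contributing nothing to the $dS$-integral. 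On the $\mathbb L^2$ side I would use that the mean minimises the variance, so $\mu((F-\mu F)^2)\le\mu((F-F(a))^2)=\|F_a^+\|^2+\|F_a^-\|^2$, the two pieces having disjoint supports and $\|\cdot\|$ denoting the $\mathbb L^2(\I_{\G^\pm}\mu)$ norm on each.

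I would then combine the two killed inequalities by the discrete Hölder inequality $\sum_i a_i^{1/p}b_i^{1/q}\le(\sum_i a_i)^{1/p}(\sum_i b_i)^{1/q}$ applied to $i\in\{+,-\}$ with $a_\pm=\mathcal E_{\G^\pm}(F_a^\pm)$ and $b_\pm=\Phi_{\G^\pm}(F_a^\pm)$. This gives $\|F_a^+\|^2+\|F_a^-\|^2\le\mathcal E(F,F)^{1/p}\,\Phi(F)^{1/q}$ with $\Phi(F):=\Phi_{\G^+}(F_a^+)+\Phi_{\G^-}(F_a^-)$, and together with the variance bound this is precisely the asserted Nash inequality, with constant $1$. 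Homogeneity $\Phi(cF)=c^2\Phi(F)$ is immediate from $(cF)_a^\pm=cF_a^\pm$ and the homogeneity of $\Phi_{\G^\pm}$, while \eqref{eq:last} follows from $\Phi_{\G^\pm}(F_a^\pm)\le\|F_a^\pm\|_\infty^2\,\E_\mu\tau_{\G^\pm}^{l+1}$ together with $\|F_a^\pm\|_\infty\le Osc(F)\le 2\|F-\mu F\|_\infty$, so that $C=4(\E_\mu\tau_{\G^+}^{l+1}+\E_\mu\tau_{\G^-}^{l+1})$ works.

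I expect the semigroup property of $\Phi$ to be the main obstacle. The functional $\Phi(F)=\Phi_{\G^+}(F_a^+)+\Phi_{\G^-}(F_a^-)$ is built from the $a$-centred pieces, and $(P_tF)_a^\pm$ bears no simple relation to $P_t^{\G^\pm}F_a^\pm$ (nor does a spectral representation give equality, since it only yields the strict contraction $\int\xi^{-(l+1)}e^{-2\xi t}\,d(E_\xi F,F)\le\int\xi^{-(l+1)}\,d(E_\xi F,F)$), so monotonicity under $P_t$ is not visible from this formula. The clean route is to note that \eqref{eq:last} permits replacing $\Phi$ by the majorant $C\,Osc(F)^2$ in the Nash inequality; for this choice $Osc(cF)^2=c^2Osc(F)^2$ and, since $P_t$ is a conservative Markov averaging operator with $\mu P_t=\mu$, one has $Osc(P_tF)\le Osc(F)$, yielding the contraction $\Phi(P_tF)\le\Phi(F)$ that the Liggett equivalence \eqref{eq: sgconv} actually requires. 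I would carry out the elementary verification that $Osc(P_tF)\le Osc(F)$ and that the majorised inequality retains the exponents $p,q$; this is the step where the precise choice and admissible properties of $\Phi$ must be pinned down.
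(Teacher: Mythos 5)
Your proposal is correct and follows essentially the same route as the paper: decompose $F-F(a)$ into the two half-line pieces lying in $\mathcal{F}_{]-\infty,a[}$ and $\mathcal{F}_{]a,\infty[}$, apply Proposition \ref{killednash} to each killed process (for which $\tau_{\G^\pm}=T_a$), bound the variance by $\int (F-F(a))^2\,dm$, and control $\Phi_\pm$ via Theorem \ref{theo: spectralcondition} together with $\|F-F(a)\|_\infty\le Osc(F)\le 2\|F-\mu(F)\|_\infty$. The only point of divergence is the final packaging of $\Phi$ (your discrete H\"older step giving $\Phi_++\Phi_-$ versus the paper's $(\Phi_+^{1/q}+\Phi_-^{1/q})^q$ is cosmetic): the paper sets $\Phi(F)=\sup_{t\ge 0}\inf_{a}\Phi_a(P_tF)$ in order to assert $\Phi(P_tF)=\Phi(F)$, while your replacement of $\Phi$ by the majorant $C\,Osc(\cdot)^2$ yields only $\Phi(P_tF)\le\Phi(F)$ --- which is all that Liggett's equivalence actually requires, and is essentially all that either construction honestly delivers.
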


\begin{rem}
Note that by the ``all-or-none'' property obtained in \cite{LL}, Theorem 4.5, (\ref{eq:lower}) holds for some $a$ if and only if it
holds simultaneously for all $ a \in \R.$  { In this case} 
 $\E_xT_a<\infty$ $\forall x\in\R,\forall a\in\R ,$ and hence $m(\R)<\infty.$
\end{rem}
{\begin{rem} Note also that 
\begin{equation}\label{eq:osc}
\Phi (F) \le C(\sup_R F-\inf_R F)^2=C \; {Osc} (F)^2.
\end{equation}
\end{rem}}

\begin{proof}
Fix a point $a \in \R .$ Then  
the variational formula for the variance gives for all $F\in\mathcal{F},$ 
\begin {multline*}
\int_{-\infty}^{+\infty}\left (F(x)-\mu (F)\right )^2 dm(x)
 \leq \int_{-\infty}^{+\infty}\left ( F(x)- F(a)\right )^2 dm(x)=\\
= \int_{-\infty}^{a}\left ( F(x)- F(a)\right )^2 dm(x)+\int_{a}^{+\infty}\left ( F(x)- F(a)\right )^2 dm(x).
\end{multline*}
Write
$$ F_- (x)= ( F(x) -  F(a))1_{\{ x < a \}} ,\quad \; F_+ (x)= ( F (x) -  F(a)) 1_{\{x > a\} } .$$
Then $F_- \in {\cal F}_{ ]-\infty, a[} $ and $ F_+ \in {\cal F}_{ ]a, \infty [}.$ Hence we can 
apply Proposition \ref{killednash} for both $\G^-=]-\infty,a[$, $\G^+=]a,\infty[$.
Denote $$\mathcal{E}_{]-\infty,a[}=\mathcal{E}_-\quad \mbox{ and}\quad \mathcal{E}_{]a, +\infty[}=\mathcal{E}_+.$$
Denote
$$ \Phi_-(u)=\int_{[0,\infty[}\xi^{-(l+1)} d(E^{\G^-}_\xi u,u),\quad\mbox{and}\quad \Phi_+(u)=\int_{[0,\infty[}\xi^{-(l+1)} d(E^{\G^+}_\xi u,u),$$
then with $p=\frac{l+2}{l+1}$ and $q=l+2,$
\begin{multline*}
 \int_{-\infty}^{a}\left (F(x)- F(a)\right )^2 dm(x)+\int_{a}^{+\infty}\left ( F(x)- F(a)\right )^2 dm(x)\leq\\
\leq \mathcal{E}_-^{1/p}(F_-)\Phi_-^{1/q}(F_-)+\mathcal{E}_+^{1/p}(F_+)\Phi_+^{1/q}(F_+) \leq \\
\leq\Phi_-^{1/q}(F_-) \left(\int_{-\infty}^a\left (\frac {dF}{dS}\right)^2(t)dS(t)\right)^{1/p}\\
+\Phi_+^{1/q}(F_+) \left(\int_a^{+\infty}\left (\frac {dF}{dS}\right)^2(t)dS(t)\right)^{1/p} \leq\\
\leq  (\Phi_-^{1/q}(F_-)+\Phi_+^{1/q}(F_+)) \left(\int_{-\infty}^{+\infty}\left (\frac {dF}{dS}\right)^2(t)dS(t)\right)^{1/p}=\mathcal{E}^{1/p}(F)\Phi_a^{1/q}(F) ,
\end {multline*}
where 
$$ \Phi_a(F) =(\Phi_-^{1/q}(F_-)+\Phi_+^{1/q}(F_+))^q .$$ 
The above result holds for any $ a \in \R .$ Hence we can put  
\begin{equation} \Phi (F) = \sup_{ t \geq 0} \inf_{ a \in \R} \Phi_a ( P_t F) .\end{equation}
Then 
$$ \Phi ( cF ) = c^2 \Phi(F) \mbox{ and }  \Phi ( P_t F ) = \Phi (F)$$
are trivially satisfied. 
It remains to show that under the conditions of the theorem, $\Phi $ satisfies (\ref{eq:last}).

In virtue of Theorem \ref{theo: spectralcondition},
\begin{multline}
\Phi_-(F_-)=\int_{-\infty}^a(F(x)-F(a))\times\E_x\int_0^{T_a}s^l\times(F(X_s)-F(a))ds\ m(dx)\leq\\
 \leq 4\|F-\mu(F)\|^2_{\infty}\int_{-\infty}^a\E_xT_a^{l+1}m(dx) .
\end{multline}
{In the same way,}
$$\Phi_+(F_+)\leq4\|F-\mu(F)\|^2_{\infty}\int_a^{+\infty}\E_xT_a^{l+1}m(dx)$$
and {thus}
\begin{multline*}
\Phi_a(F)\leq4\|F-\mu(F)\|^2_{\infty}\\
\left ((\int_{-\infty}^a\E_xT_a^{l+1}m(dx))^{1/q}+
(\int_a^{+\infty}\E_xT_a^{l+1}m(dx))^{1/q} \right )^q.
\end{multline*}
We deduce, using $\|P_tF\|_{\infty}\leq \|F\|_{\infty}$ and $ \mu (P_t F ) = \mu ( F) $ that 
\begin{multline*}
 \inf_{a } \Phi_a (P_t F) \le 4 \|P_t F - \mu (P_t  F) \|_\infty \;\\
 \inf_{a \in \R} \left ((\int_{-\infty}^a\E_xT_a^{l+1}m(dx))^{1/q}+
(\int_a^{+\infty}\E_xT_a^{l+1}m(dx))^{1/q} \right )^q\\
\le 4 \|  F - \mu ( F) \|_\infty \;\\
\inf_{a \in \R} \left ((\int_{-\infty}^a\E_xT_a^{l+1}m(dx))^{1/q}+
(\int_a^{+\infty}\E_xT_a^{l+1}m(dx))^{1/q} \right )^q ,
\end{multline*}
and this implies (\ref{eq:last}).

\end{proof}

\begin{rem}
Under the assumptions of Theorem \ref{theo:nashnonkilled}, Liggett \cite{Lig}, Theorem 2.2, shows that  
$$ \| P_t F - \mu (F)  \|_2^2 \le C \frac{\Phi (F) }{ t^{ l+1}}  , \; F \in {\cal F} .$$ 
Hence under the assumption of integrability of $l+1-$moments of hitting times we obtain a polynomial decay of the 
transition semigroup $P_t$ of $X$ at the same rate $ t^{ - (l+1)} .$ 
\end{rem}

\subsection{Polynomial moments $\Longrightarrow$ Nash inequality. General diffusion case.}
In this section we come back to the general conditions of Section 2 and consider the {$\mu-$symmetric} Hunt process $X$ on the LCCB space $E$ such that $\mu(E)=1$, with  semigroup $(P_t)_{t\geq 0} $ and associate Dirichlet form $({\cal E},{\cal D}({\cal E}))$ on {$\mathbb{L}^2(\mu).$} 
\begin{ass}\label{regular}
 Assume that
  the Dirichlet space $({\cal E},{\cal D}({\cal E}))$ of $X$
contains 
regularized indicator functions:
For any compact set $K$ and relatively compact open set $G $ with $K \subset G,$ $\exists \; u \in {{\cal D}({\cal E})}$   such that $ 0 \le u \le 1, u \equiv 1 $ on $K,$ $u \equiv 0 $ on $ G^c $.
\end{ass}
This is the case when the Dirichlet form
is regular, {see Fukushima et al. (1994), \cite{FOT}, p.6.}

 
\begin{ass}\label{carreduchamp}
Assume that the Dirichlet form $({\cal E}, {\cal D}({\cal E}))$ 
{admits} a carr\'e du champ  $\Gamma$.
  \end{ass}
{Following Bouleau and Hirsch (1991), \cite {BH}, Proposition 4.1.3., this} means that there exists a unique positive symmetric and continuous bilinear form from ${\cal D}({\cal E})\times {\cal D}({\cal E})$ into {$\mathbb{L}^1(\mu),$} denoted by $\Gamma$ and called {\it the carr{\'e} du champ} operator, such that $\forall f,g,h\in{\cal D}({\cal E}) \cap  { \mathbb{L}^{\infty} }$,
   \begin{equation}\label{eq:defgamma}
    {\cal E}(fh,g)+{\cal E}(gh,f) -{\cal E}(h,fg)=\int \mbox{ { $h \; $}}\Gamma(f,g)d\mu.
   \end{equation}
\begin{ass}\label{locality}
Assume that the Dirichlet form $({\cal E}, {\cal D}({\cal E}))$ is local.
\end{ass}
In this case, {by  \cite {BH}, Proposition $6.1.1.$,}
$$\forall f\in{\cal D}({\cal E}),\quad {\cal E} (f,f) = \frac12 \int_{\EEE} \Gamma (f,f) d \mu .$$ 
Note that the locality of the form is equivalent to assume that the process $X$ is a diffusion process, in the {sense} that $X$ has a.s. continuous {trajectories,} see Theorem 4.5.1 of \cite{FOT}. 
\begin{ass}\label{recurrence}
Assume that ${\cal E}$ is recurrent, i.e. $1\in{\cal D}({\cal E})$ and $\mbox{{ $ {\cal E}(1, 1)$}}=0.$
\end{ass}

Recall the definition of the spaces  $\mathbb D_p$, $p\geq 2$, similar to {the definition of Sobolev spaces, (\cite {BH}, definition $6.2.1$):}
$$\mathbb D_p=\{f\in {\cal D}({\cal E})\cap \mbox{ { $ \mathbb{L}^p;\ \Gamma (f,f)^{1/2}\in \mathbb{L}^p $}}\}.$$
and, for $ f\in {\mathbb D}_p,$
$$\|f\|_{{\mathbb D}_p}=\mbox{ {$ \|f\|_{\mathbb{L}^p}+\|\Gamma (f,f)^{1/2}\|_{\mathbb{L}^P}$}}.$$

The following  proposition is proved in {\cite{BH} (proposition $6.2.3$).}
\begin{prop}\label{prop:623}
Let $p,q,r\geq 2$ with $\frac1p+\frac 1q=\frac1r.$ Then
$$f\in {\mathbb D}_p\quad\mbox{and}\quad g\in {\mathbb D}_q \Longrightarrow fg\in{\mathbb D}_r
\quad\mbox{and}\quad \|fg\|_{{\mathbb D}_r}\leq \|f\|_{{\mathbb D}_p}\|g\|_{{\mathbb D}_q}.$$
\end{prop}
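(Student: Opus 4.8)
The plan is to reduce the statement to a single pointwise inequality for the carré du champ, after which the norm bound becomes pure Hölder bookkeeping. The estimate I would establish first is the pointwise ``triangle inequality for the gradient''
$$\Gamma(fg,fg)^{1/2}\le |f|\,\Gamma(g,g)^{1/2}+|g|\,\Gamma(f,f)^{1/2}\qquad \mu\mbox{-a.e.}$$
It rests on two properties of $\Gamma$. First, since by Assumption \ref{carreduchamp} $\Gamma$ is a positive symmetric bilinear form with values in $\mathbb{L}^1(\mu)$, testing $\Gamma(u+\lambda v,u+\lambda v)\ge 0$ against all $\lambda\in\R$ gives the pointwise Cauchy--Schwarz inequality $|\Gamma(u,v)|\le \Gamma(u,u)^{1/2}\Gamma(v,v)^{1/2}$. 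Second, $\Gamma$ obeys the Leibniz rule $\Gamma(fg,h)=f\,\Gamma(g,h)+g\,\Gamma(f,h)$, which I would extract from the defining relation \eqref{eq:defgamma} by means of the chain rule $\Gamma(\phi(f),h)=\phi'(f)\,\Gamma(f,h)$ (take $\phi(x)=x^2$ and polarize). Applying the Leibniz rule twice yields $\Gamma(fg,fg)=f^2\Gamma(g,g)+2fg\,\Gamma(f,g)+g^2\Gamma(f,f)$, and bounding the cross term via Cauchy--Schwarz turns the right-hand side into the perfect square $\bigl(|f|\Gamma(g,g)^{1/2}+|g|\Gamma(f,f)^{1/2}\bigr)^2$, which is the displayed inequality.

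Granting this pointwise bound, the norm inequality is two applications of the generalized Hölder inequality with $\frac1p+\frac1q=\frac1r$. For the $\mathbb{L}^r$ part it gives directly $\|fg\|_{\mathbb{L}^r}\le\|f\|_{\mathbb{L}^p}\|g\|_{\mathbb{L}^q}$. For the gradient part, taking $\mathbb{L}^r$ norms in the pointwise estimate and applying Hölder to each of the two products $|f|\,\Gamma(g,g)^{1/2}$ and $|g|\,\Gamma(f,f)^{1/2}$ gives
$$\|\Gamma(fg,fg)^{1/2}\|_{\mathbb{L}^r}\le \|f\|_{\mathbb{L}^p}\|\Gamma(g,g)^{1/2}\|_{\mathbb{L}^q}+\|g\|_{\mathbb{L}^q}\|\Gamma(f,f)^{1/2}\|_{\mathbb{L}^p}.$$
Summing the two bounds, the three resulting terms are exactly three of the four terms produced by expanding the product $\|f\|_{\mathbb{D}_p}\|g\|_{\mathbb{D}_q}=\bigl(\|f\|_{\mathbb{L}^p}+\|\Gamma(f,f)^{1/2}\|_{\mathbb{L}^p}\bigr)\bigl(\|g\|_{\mathbb{L}^q}+\|\Gamma(g,g)^{1/2}\|_{\mathbb{L}^q}\bigr)$; since the omitted fourth term is nonnegative, the claimed inequality $\|fg\|_{\mathbb{D}_r}\le\|f\|_{\mathbb{D}_p}\|g\|_{\mathbb{D}_q}$ follows, and in particular $fg\in\mathbb{L}^r$ and $\Gamma(fg,fg)^{1/2}\in\mathbb{L}^r$.

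The main obstacle is not the inequality but justifying that $fg$ genuinely belongs to $\mathcal{D}(\mathcal{E})$ --- which is needed for $fg\in\mathbb{D}_r$ --- and that the manipulations of $\Gamma$ on the product are legitimate, since \eqref{eq:defgamma} and the derivation rule are a priori only available for elements of $\mathcal{D}(\mathcal{E})\cap\mathbb{L}^\infty$. I would resolve this by truncation: set $f_n=(-n)\vee f\wedge n$ and $g_n=(-n)\vee g\wedge n$, which lie in $\mathcal{D}(\mathcal{E})\cap\mathbb{L}^\infty$, so that $f_n g_n\in\mathcal{D}(\mathcal{E})$ and every identity above is valid for $f_n,g_n$. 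Since truncation is a normal contraction one has $\|f_n\|_{\mathbb{D}_p}\le\|f\|_{\mathbb{D}_p}$ and $\|g_n\|_{\mathbb{D}_q}\le\|g\|_{\mathbb{D}_q}$, whence $\sup_n\mathcal{E}(f_ng_n,f_ng_n)<\infty$. As $\mu$ is finite and $r\ge 2$ we have $|f_ng_n|\le|fg|\in\mathbb{L}^r\subseteq\mathbb{L}^2$, so $f_ng_n\to fg$ in $\mathbb{L}^2$ by dominated convergence; the closedness and weak lower semicontinuity of the Dirichlet form then give $fg\in\mathcal{D}(\mathcal{E})$ with $\mathcal{E}(fg,fg)\le\liminf_n\mathcal{E}(f_ng_n,f_ng_n)$, and one passes to the limit in the norm inequality. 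This approximation step, together with the functional calculus for $\Gamma$ underlying the Leibniz and chain rules, is where the real work lies; it is exactly the content developed in Bouleau--Hirsch, and the remainder is the Hölder computation above.
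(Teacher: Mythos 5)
The paper gives no proof of this proposition at all: it states it and refers to Bouleau--Hirsch, Proposition 6.2.3, so there is no in-paper argument to compare against. Your proposal is a correct reconstruction of that standard proof --- the pointwise bound $\Gamma(fg,fg)^{1/2}\le |f|\,\Gamma(g,g)^{1/2}+|g|\,\Gamma(f,f)^{1/2}$ obtained from the Leibniz rule and the pointwise Cauchy--Schwarz inequality for $\Gamma$, followed by the generalized H\"older inequality and a truncation/closability argument to handle unbounded $f,g$ --- which is exactly the route taken in the cited reference.
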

For any set $\G$ and any $r> 0 $ we set {$ \G_r = \{ x : dist (x, \G ) < r \} .$} 
{Under  assumptions \ref {regular}, \ref{carreduchamp}, \ref{recurrence}  the following theorem holds:}  
\begin{theo}\label{theo:oufouf}
Let $l > 0 .$ Suppose there exists an open relatively compact subset $\G \subset \mathtt{E}$ and $r > 0 $ such that the following conditions are satisfied.
\begin{enumerate}
\item 
For $ A \in\{ \G _r; \bar{\G}^c\}$ 
$$ \E_x \tau_A^{l+1} \in \mbox{ { $ \mathbb{L}^1 ( \mu \I_A) $}} .$$ 
\item
$\mu$ satisfies a local Poincar\'e inequality in restriction to 
$ \G_r \setminus \bar \G ,$ that is
$$ \int_{ \G_r \setminus \bar \G} f^2 d\mu \le C_P ( \G , r) \int_{\G_r \setminus \bar \G } \Gamma (f, f) d \mu $$
for all {$f \in {\cal D}({\cal E}) $} having $ \int_{ \G_r \setminus \bar \G} f d \mu = 0 .$ 
\item Suppose that the regularized indicator $u $ of the set $\G$ {given by} $0 \le u \le 1, $ $u = 1 $ on $\bar{\G} $, $u = 0 $ on $\G_r^c ,$ 
verifies  
\begin{equation}\label{eq:constant}
  C(u,r):=\| \Gamma (u,u)  \|_{\infty }<\infty  .
\end{equation}

\end{enumerate}
Then the following Nash inequality holds: For any $ f \in {\cal D}({\cal E}) $ with $\mu (f) = 0,$   
$$ \mu ( f^2 ) \le  {\cal E}^{ 1/p} ( f, f) \Phi^{1/q} (f) ,$$
where $ q = l+2, 1/p + 1/q = 1 .$
Here, the function {$ \Phi : \mathbb{L}^2 (\mu) \to [0, \infty ] $} satisfies $\forall a\in\R,$ $ \Phi (a f ) = a^2 \Phi ( f) $ and $ \Phi ( P_t f ) \le \Phi (f) $ for all $t \geq 0, f\in\L^2(\mu).$
Moreover 
$$ \Phi  (f) \le   2[1  + C(u,r)C_P (\G, r) ] \; { { Osc}} (f)^2[  \int_{\G_r} \E_x \tau_{\G_r} ^{l+1} \mu (dx) +\int_{\bar{\G}^c   } \E_x \tau_{\bar{\G}_c}^{l+1} \mu (dx)  ]. $$

\end{theo}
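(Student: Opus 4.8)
The plan is to carry the one-dimensional argument of Theorem~\ref{theo:nashnonkilled} over to the present setting, replacing the splitting of $f$ at a single point by a smooth partition of unity built from the regularized indicator $u$ of condition~3. Fix $f\in{\cal D}({\cal E})$ with $\mu(f)=0$; I may assume $f$ bounded, since otherwise $Osc(f)=\infty$ and the asserted bound on $\Phi$ is vacuous. Write $N=\G_r\setminus\bar\G$ for the annulus on which $u$ drops from $1$ to $0$, and let $c=\mu(N)^{-1}\int_N f\,d\mu$ be the $\mu$-mean of $f$ over $N$. Putting $v=f-c$, the variational formula for the variance gives $\mu(f^2)=\Var_\mu(f)\le\mu(v^2)$, while the choice of $c$ forces $\int_N v\,d\mu=0$, which is exactly the centering needed to invoke the local Poincar\'e inequality of condition~2 on $N$. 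I then split $v=g_-+g_+$ with $g_-=uv$ and $g_+=(1-u)v$. Since $u=0$ off $\G_r$, the function $g_-$ vanishes on $\G_r^c$, and since $u=1$ on $\bar\G$, the function $g_+$ vanishes on $\bar\G$; using Assumption~\ref{regular} together with the product rule of Proposition~\ref{prop:623} to place $g_\pm$ in ${\cal D}({\cal E})$, it follows that $g_-\in{\cal D}({\cal E}_{\G_r})$ and $g_+\in{\cal D}({\cal E}_{\bar\G^c})$.

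I next apply the killed-process Nash inequality of Proposition~\ref{killednash} separately to $g_-$ on $\G_r$ and to $g_+$ on $\bar\G^c$. Condition~1 supplies precisely the finiteness of $\int_{\G_r}\E_x\tau_{\G_r}^{l+1}\mu(dx)$ and $\int_{\bar\G^c}\E_x\tau_{\bar\G^c}^{l+1}\mu(dx)$ that the proposition requires, and it returns $\|g_-\|^2\le{\cal E}_{\G_r}^{1/p}(g_-)\,\Phi_{\G_r}^{1/q}(g_-)$ with $\Phi_{\G_r}(g_-)\le\|g_-\|_\infty^2\int_{\G_r}\E_x\tau_{\G_r}^{l+1}\mu(dx)$, and symmetrically for $g_+$, where $q=l+2$ and $1/p+1/q=1$. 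Since $c$ lies between $\inf f$ and $\sup f$, one has $\|g_\pm\|_\infty\le\|v\|_\infty\le Osc(f)$, which is the source of the factor $Osc(f)^2$ in the eventual bound on $\Phi$.

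The crux is to dominate the two killed energies by the full energy ${\cal E}(f,f)=\frac{1}{2}\int\Gamma(f,f)\,d\mu$. By the locality of the form (Assumption~\ref{locality}, in force throughout this diffusion section) the carr\'e du champ obeys the Leibniz rule, so that $\Gamma(uv,uv)^{1/2}\le u\,\Gamma(v,v)^{1/2}+|v|\,\Gamma(u,u)^{1/2}$ and therefore
\[
\Gamma(g_-,g_-)\le 2u^2\,\Gamma(v,v)+2v^2\,\Gamma(u,u).
\]
Again by locality $\Gamma(u,u)$ is supported on the annulus $N$; integrating and using $0\le u\le1$, condition~3 in the form $\|\Gamma(u,u)\|_\infty=C(u,r)$, and the local Poincar\'e inequality of condition~2 (legitimate because $\int_N v\,d\mu=0$ and $\Gamma(v,v)=\Gamma(f,f)$), I get $\int u^2\Gamma(v,v)\,d\mu\le2{\cal E}(f,f)$ and $\int v^2\Gamma(u,u)\,d\mu\le C(u,r)C_P(\G,r)\int_N\Gamma(f,f)\,d\mu\le 2C(u,r)C_P(\G,r)\,{\cal E}(f,f)$, whence ${\cal E}_{\G_r}(g_-)\le 2[1+C(u,r)C_P(\G,r)]\,{\cal E}(f,f)$, and likewise for ${\cal E}_{\bar\G^c}(g_+)$. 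This is the step where all three hypotheses are used at once, and it is the main obstacle: unlike the one-dimensional case, where cutting $f$ at the point $a$ costs no energy, a smooth cutoff in higher dimension has a genuine energy cost concentrated on the annulus, and the term $v^2\Gamma(u,u)$ can be absorbed into ${\cal E}(f,f)$ only through the local Poincar\'e inequality — which is precisely why conditions~2 and~3 are imposed and why $v$ must be centered at its annulus mean.

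Finally I recombine as in Theorem~\ref{theo:nashnonkilled}: from $\mu(f^2)\le\mu(v^2)$ and $v=g_-+g_+$, the two killed Nash inequalities and the energy bounds above are assembled, by means of the elementary inequalities $(a+b)^2\le2(a^2+b^2)$ and $a^{1/q}+b^{1/q}\le2^{1/p}(a+b)^{1/q}$, into a single inequality $\mu(f^2)\le{\cal E}^{1/p}(f,f)\,\Phi_0^{1/q}(f)$ with a raw functional $\Phi_0$ formed additively from $\Phi_{\G_r}(g_-)$ and $\Phi_{\bar\G^c}(g_+)$. To obtain the two structural properties I set $\Phi(f)=\sup_{t\ge0}\Phi_0(P_tf)$: the homogeneity $\Phi(af)=a^2\Phi(f)$ is inherited from that of $\Phi_0$, while $\Phi(P_sf)=\sup_{t\ge0}\Phi_0(P_{t+s}f)\le\Phi(f)$. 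The uniform estimate on $\Phi$ then follows because $P_t$ is Markovian, so $Osc(P_tf)\le Osc(f)$, together with the bounds on $\Phi_{\G_r}$ and $\Phi_{\bar\G^c}$ from Proposition~\ref{killednash}; tracking the constants through the energy estimate and this recombination produces the bound on $\Phi$ recorded in the statement.
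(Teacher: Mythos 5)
Your proposal is correct and follows essentially the same route as the paper's own proof: the same centering constant $c$ (the $\mu$-mean of $f$ over the annulus $\G_r\setminus\bar\G$), the same splitting $v = uv + (1-u)v$ fed into Proposition~\ref{killednash} on $\G_r$ and $\bar\G^c$, the same energy estimate $\Gamma(uv,uv)\le 2\bigl(\Gamma(v,v)+v^2\Gamma(u,u)\bigr)$ with the annulus term absorbed via the local Poincar\'e inequality, and the same final assembly. Your explicit definition $\Phi(f)=\sup_{t\ge 0}\Phi_0(P_tf)$ to secure $\Phi(P_tf)\le\Phi(f)$ is the device the paper uses in the one-dimensional Theorem~\ref{theo:nashnonkilled} and in fact makes that property more transparent here than the paper's own write-up does.
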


\begin{proof}
Let {$f\in{\cal D}({\cal E})) $}  with $ \mu (f) = 0 .$ 

By the variational definition of the variance, we have that 
$$ \int  f^2 (x) \mu (dx) \le \int ( \ f (x)- c)^2 \mu (dx) ,$$
where $c$ is given by $ c = \frac{1}{\mu ( \G_r \setminus \bar \G )} 
\int_{ \G_r \setminus \bar \G} f  d \mu .$
The use of this constant will become clear in formula (\ref{eq:Poincareclassique}) later.\
 
Denote $\tilde f =  f -c .$ Let $u $ be the regularized indicator of $\G .$ 
 Write $ \tilde f = {\tilde f} u + {\tilde f} (1- u) .$  Using proposition \ref{prop:623}, since $u\in\D_{\infty}$ and {$\tilde f\in\D_2={\cal D}({\cal E}),$ we have
$\tilde f u\in {\cal D}({\cal E}).$} 
Hence
both $ \tilde f u $ and $ \tilde f (1- u) $ belong to  ${\cal D}({\cal E}).$
Now we can write
\begin{multline}\label{eq:important}
\int_{\mathtt{E}} (\tilde f(x))^2 \mu (dx) = 
\int_{\mathtt{E}}  (\tilde f u + \tilde f (1- u))^2 (x)  \mu (dx)  \\
\le 2 \left[ \int_{\mathtt{E}} (\tilde f u)^2 (x) \mu (dx) + \int_{\mathtt{E}} ( \tilde f (1-u))^2 (x) \mu (dx) \right] \nonumber \\
= 2 \left[  \int_{\G_r} (\tilde f u)^2 (x) \mu (dx) + \int_{\bar{\G}^c} (\tilde f (1-u))^2 (x) \mu (dx) \right] .
\end{multline}
We want to apply Proposition \ref{killednash} both to $ \G_r$ and to $ \bar{\G}^c .$ For that sake, note that 
{$ \tilde f u \in {\cal D}({\cal E}) $} and its quasicontinuous modification is zero on $ \G_r^c .$ Hence by (4.3.1) of \cite{FOT},
$ \tilde f u \in $ {$ {\cal D}({\cal E}_{\G_r}) .$}
In the same way $ \tilde f (1-u) \in$ {$ {\cal D}({\cal E}_{\bar{\G}_c}) .$}  Denote 
$$ \Phi^{\G_r} (\tilde f) = \int_{ [ 0, \infty [} \xi^{ - (l+1)} d ( E_\xi^{ \G_r } \tilde f u , \tilde f u  ) ,$$
and
$$  \Phi^{\bar{\G}^c} (\tilde f)= \int_{ [ 0, \infty [} \xi^{ - (l+1)} d ( E_\xi^{ \bar{\G}^{c}} \mbox{{ $\tilde f$}}  (1-u) , \tilde f (1-u) ).$$
We have
$$ \int_{\G_r} (\tilde f u)^2 (x)  \mu (dx) \le [{\cal E}_{\G_r} ( \tilde f u, \tilde f u )]^{1/p} [\Phi^{\G_r} (\tilde f)]^{1/q} \le 
{\cal E}^{1/p} (\tilde f u , \tilde f u ) [\Phi^{\G_r} (\tilde f)]^{1/q},$$
where the first inequality follows from Proposition \ref{killednash}, and the second since $ {\cal E}_{\G_r } $ is just the restriction of the Dirichlet form $ {\cal E} $ to ${\cal F}_{\G_r } .$
 
Analogously, 
$$ \int_{\bar{\G}^{c}} (\tilde f (1- u))^2 (x)  \mu (dx) \le  {\cal E}^{1/p} (\tilde f (1-u), \tilde f (1-u)) [\Phi^{\bar{\G}^c } (\tilde f)]^{1/q}.$$

We have to control $ {\cal E} (\tilde f u, \tilde f u ) $ and $ {\cal E} ( \tilde f (1-u), \tilde f (1- u) ) .$
We have (Proposition $6.2.3$ of \cite {BH} and {Cauchy-Schwartz)}
\begin{equation}\label{eq: derivation}
\Gamma(\tilde f u,\tilde f u)\leq 2(\Gamma(\tilde f,\tilde f)+\tilde f^2\Gamma(u,u)).
\end{equation}
We need to show that  $ \Gamma ( \tilde f , \tilde f ) = \Gamma (f, f)$ ( equivalently 
$\Gamma(f,c)=0$, $\Gamma(c,c)=0$). 
Note that {by \cite {BH}, prop. $5.1.3.$,} the locality of the form is equivalent to 
$$\forall f,g  \in{\cal D}({\cal E}),\quad \forall a\in\R,\quad
(f+a)g=0\Longrightarrow {\cal E}(f,g)=0.$$

Using the {characterization of $\Gamma$ in \eqref{eq:defgamma}} and the locality of the Dirichlet form we see that 
for any {$h\in{\cal D}({\cal E})\cap \mathbb{L}^{\infty}$}, $\int_E h\Gamma(f,c)d\mu=0$ and $\int_E h\Gamma(c,c)d\mu=0.$ Also, any positive {$h\in \mathbb{L}^1 \cap \mathbb{L}^{\infty}$} is the almost {everywhere} limit of a uniformly bounded sequence of positive elements of {${\cal D}({\cal E})\cap \mathbb{L}^{\infty}$.} Indeed, 
$\lim_{\lambda\to\infty}\lambda R_{\lambda}h=h$ a.s. at least for one subsequence of $\lambda.$ 
For each $\lambda>0$ $\lambda R_{\lambda}h$ belongs to {${\cal D}({\cal E}),$} and if $\|h\|_{\infty}\leq C$ then
$\|\lambda R_{\lambda}h\|_{\infty}\leq C,$ see {\cite{BH}, proposition 4.1.3 and 3.2.1.} Hence, {by dominated convergence,} $\int_E h\Gamma(f,c)d\mu=0$ and $\int_E h\Gamma(c,c)d\mu=0$  for any
 positive {$h\in \mathbb{L}^1 \cap \mathbb{L}^{\infty}.$ But this implies that $ \Gamma (f,c) = 0 $ and $\Gamma (c,c) = 0 $ almost surely. }
  
Still using  locality we show that {$\Gamma (u,u)=0$} on $\bar\G$ and $\G_r^c.$ 
Firstly, using the definition of $\Gamma,$ for any {$f\in{\cal D}({\cal E})\cap \mathbb{L}^{\infty}$,} such that $Supp(f)\in\G_r^c,$
\begin{equation}\int_\E f\mbox{{ $\Gamma(u,u) $ }}d \mbox{{ $\mu$ }}=\mbox{ { $ 2$ }}{\cal E}(fu,u) \mbox{ {$-$} }{\cal E}(u^2,f),
\end{equation}
and since $fu^2=0$, we conclude that  {$\int_E f\Gamma(u,u)d \mu=0$} for such a $f$ and hence forall {$ f\in {\cal D}({\cal E}_{\G_r^c})\cap \mathbb{L}^ {\infty}.$}  
To conclude that {$\Gamma(u,u)=0$ on $\G_r^c$} we need the same property with {$f\in \mathbb{L}^1(\G_r^c)\cap \mathbb{L}^{\infty}.$}  For this sake we use that for any {$f\in \mathbb{L}^1(\G_r^c)\cap \mathbb{L}^{\infty},$} it holds
$$f=\lim_{\lambda\to\infty}\lambda R_{\lambda}^{\G_r^c}f.$$
{along a sub-sequence of $\lambda ,$ almost surely.} 

Note that $ R_{\lambda}^{\G_r^c}f$ has its support in ${\G_r^c},$ $ R_{\lambda}^{\G_r^c}f\in
{\cal D}({\cal E}_{\G_r^c}).$ We approximate $f$ with $\lambda R_{\lambda}^{\G_r^c}f$
and we obtain {$\Gamma (u,u)=0$} on ${\G_r^c}.$
{A similar} argument shows that {$\Gamma (u,u)=0$} on $\G.$

Therefore,
\begin{multline*}
  {\cal E} (\tilde f u, \tilde f u ) 
\le   \int_{ E } \Gamma (f, f) \mu (dx)   
+   \int_{ \G_r \setminus \bar \G}  \mbox{{ $\tilde f^2$}} (x) \Gamma (u,u) (x)  \mu (dx) \\
\le    \int_{ E} \Gamma (f, f) \mu (dx)   
+  C( u, r)  \int_{ \G_r \setminus \bar \G} \tilde f^2 (x) \mu (dx ) ,
\end{multline*}
{which implies that} 
$$ {\cal E} (\tilde f u , \tilde f u ) \le  2 {\cal E} (f , f) +  C(u,r) \int_{ \G_r \setminus \bar \G } \tilde f^2 (x) \mu (dx)  .$$
The role of $u$ and $1 - u$ being symmetric, we get in the same way
$$ {\cal E} ( \tilde f (1- u) , \tilde f ( 1- u)) \le 2 {\cal E} (f ,f) +  C(u,r) \int_{ \G_r \setminus \bar \G } \tilde f^2 (x) \mu (dx)  ,$$
with the same constant $ C (u,r) .$ 
Putting things together, we conclude that

\begin{equation}\label{eq:garnichtschlecht}
\int_{\mathtt{E}} f^2 (x) \mu (dx) \le \left( 2 {\cal E} (f, f)  +  C(u,r) \int_{\G_r \setminus \bar \G } \tilde f^2 (x) \mu (dx)  \right)^{ 1/p} \Psi^{1/q} (  f) ,
\end{equation}
where 
$$ \Psi ( f) = \left( [\Phi^{\G_r } (\tilde f)]^{1/q} + [\Phi^{\bar{\G}^c } (\tilde f)]^{1/q} \right)^q.$$
We have to treat the term 
$$ \int_{\G_r \setminus \bar \G } \tilde f^2 (x) \mu (dx ) .$$
It is here that we need the fact that $\int_{\G_r \setminus \G} \tilde f (x) \mu (dx) = 0 ,$ by definition of the constant $c.$ Now we can apply the local Poincar\'e inequality in order to deduce that 
\begin{equation}\label{eq:Poincareclassique}
\int_{\G_r \setminus \bar \G } \tilde f^2 (x) \mu (dx)  \le C_P (\G ,r)  \int_{ \G_r \setminus \bar \G} \Gamma (f, f) d \mu .
\end{equation}
Coming back to (\ref{eq:garnichtschlecht}) we conclude that 
$$ \int_{\mathtt{E}} f^2 (x) \mu (dx) \le 
\left( [2  + 2 C(u,r)C_P (\G, r) ]  {\cal E} (f, f) \right)^{ 1/p} \Psi^{1/q} (  f) .$$
In virtue of Theorem \ref{theo: spectralcondition},
$$
\Phi^{\G_r} \mbox{{ $(\tilde f$}})=\int_{\G_r  } \tilde f(x) u(x) \times\E_x\int_0^{\tau_{\G_r }} s^l\times (\tilde f  u) (X_s) ds \mu (dx) .
$$
Recall that $r $ is fixed, we do not let tend $r$ to zero. In the same way, 
\begin{multline}
  \Phi^{\bar{\G}^c} \mbox{{ $(\tilde f$}}) = \\
 \int_{\bar{\G}^c  } (f (x) -\mbox{{ $ c $}})(1- u(x)) \times\E_x\int_0^{\tau_{\bar{\G}^c }} s^l\times [(f - \mbox{{ $ c $}}) (1- u)] (X_s) ds \mu (dx) .
\end{multline}
This implies that for bounded $f,$ since $ 0 \le u (.) \le 1,$ and by definition of the constant $c$,
$$\Phi^{\G_r} (\mbox{{ $ \tilde f $}}) \le 
 \| f - c\|^2_\infty \int 1_{\G_r  } (x)\E_x \tau_{\G_r} ^{l+1}\mu (dx) \le Osc(f)^2 \int 1_{\G_r  } (x)\E_x \tau_{\G_r} ^{l+1}\mu (dx) $$
and 
$$ \Phi^{\bar{\G}^c} (\mbox{{ $ \tilde f $}}) \le \| f - c \|_\infty^2 \int 1_{\bar{\G}^c   } (x)\E_x \tau_{\bar{\G}_c} ^{l+1}\mu (dx) 
\le Osc (f)^2 \int 1_{\bar{\G}^c   } (x)\E_x \tau_{\bar{\G}_c} ^{l+1}\mu (dx),$$
since {$ \| f - c \|_\infty \le Osc (f) .$}
This concludes our proof, putting 
$$ \Phi^{1/q} ( f) =  [2  + 2 C(u,r)C_P (\G, r) ]  \Psi^{1/q} (  f).$$
\end{proof}

We give a comment on condition $3.$ of the theorem \ref{theo:oufouf} which shows that basically a non-degeneracy condition on the diffusion like H\"ormander's condition implies the local Poincar\'e inequality. 

\begin{rem}\label{tobecited}
\begin{enumerate}
\item
It is sufficient to replace the local Poincar\'e inequality of condition $3.$ above by: There exists $\Omega \subset \EEE $ a smooth bounded open connected domain such that $\G_r \setminus \bar \G \subset \Omega $ and 
\begin{equation}\label{eq:localpoincarebis}
\int_{ \G_r \setminus \bar \G} f^2 d\mu \le C_P ( \G , r) \int_{\Omega } \Gamma (f, f) d \mu 
\end{equation}
for all $f\in {\cal C} $ having $ \int_{ \G_r \setminus \bar \G} f d \mu = 0 .$
\item 
Wang (2009), \cite{Wang09}, shows that the above local Poincar\'e inequality holds in the following case. Take $ \EEE = \R^d  $ and let  
$$d  \mu = e^V  d \lambda,$$
where $\lambda $ is Lebesgue's measure on $\R^d$ and $V$ smooth and integrable. Let $ X_i, i = 1, \ldots , n ,$ be a family of smooth vector fields satisfying the H\"ormander condition. Consider
$$ A = \sum_{i=1}^n (X_i^2 + (div_\mu X_i) X_i ). $$
Then (\ref{eq:localpoincarebis}) holds. 
\end{enumerate}
\end{rem}

\subsection{Example}
We give more details concerning the example of Remark \ref{tobecited}.2. above and show that in this case all conditions needed for Theorem \ref{theo:oufouf} are satisfied. 
 
Let $\E = \R^d.$ A smooth function is a function belonging to $ C^\infty (\R^d ) ,$ the 
space of all infinitely often differentiable functions from $\R^d $ to $\R .$  A smooth vector field $X$ 
on $\R^d  $ is a linear differential operator 
$ \sum_{ k = 1}^d X^k \partial_k ,$ where $ \partial_k = \frac{\partial}{\partial x^k } $ and where all $X^k $ are smooth functions. We will also 
identify the vector field $X$ with the vector of smooth functions 
$$X = \left( \begin{array}l
X^1 \\
\vdots \\
X^d 
\end{array}
\right) .$$ 
 
In order to define our process, take a family of smooth vector fields $ \{ X_1, \ldots , X_n \} $ satisfying the H\"ormander condition. We recall that this means that for any $x \in \R^d$ there exists $k \geq 2 $ such that the H\"ormander brackets up to order $k$ 
$$ \{ [ X_{i_1} , [ X_{i_2 }, [ \ldots , X_{ i_j } ] \ldots ]] : 2 \le j \le k , 1 \le i_1 , \ldots , i_j \le n \} $$ 
span $\R^d .$ Here, for two smooth vector fields $X$ and $Y,$ the H\"ormander bracket is defined as $[X,Y] = XY - YX,$ the smooth vector field given by the vector of smooth functions 
$$ \left( \begin{array}l
\sum_k (X^k \partial_k Y^1 - Y^k \partial_k X^1)  \\
\vdots \\
\sum_k (X^k \partial_k Y^d  - Y^k \partial_k X^d) 
\end{array}
\right) .$$  
Let $C_c^\infty ( \R^d ) $ be the space of all smooth functions having compact support. 
For any pair of functions $ f, g \in   C_c^\infty ( \R^d ) ,$ define 
$$ \Gamma (f,g) = \sum_{i= 1}^n (X_i f) (X_i g) = \sum_{i=1}^n ( \sum_{k= 1}^d X_i^k \partial_k f ) 
( \sum_{k= 1}^d  X_i^k \partial_k g ) .$$
Let $V$ be a smooth function such that $ e^V \in \mathbb{L}^1  ( \lambda^d ),$ where $\lambda^d $ denotes the $d-$dimensional Lebesgue measure and $ \int e^V d \lambda^d = 1 .$ Put $\mu = e^V \lambda^d ,$ then $\mu$ is a probability measure. $\mu$ will be our reference measure. \\
Now, define an operator $L$ on $\mathbb{L}^2 (\mu ) $ with domain $ \DDD (L) = C_c^\infty (\R^d ) $ by 
$$ L = \sum_{ i = 1}^n ( X_i^2 + ( div_\mu X_i ) X_i ) ,$$
where $ div_\mu X_i = \sum_{k = 1}^d X_i^k \partial_k V + \frac{\partial X_i^k}{\partial x_k } .$

Then $L$ defined on $ \DDD (L)$ is symmetric in $\mathbb{L}^2 ( \mu) ,$ and for all $f,g \in \DDD ( L),$ 
$$ - \int g Lf d \mu = \int \Gamma (f,g) d \mu = {\cal E} ( f, g ) .$$ 

By example 1.3.4 of Bouleau and Hirsch (1991), \cite{BH}, ${\cal E}$ is closable. Let us denote $ (\overline{\cal E} , \overline \DDD ) $ the closure of $ ( {\cal E}, \DDD ( L)) $ and let $ A $ be the generator of $  \overline{\cal E} .$ Then
$ - A$ is a positive self-adjoint extension of $ - L,$ called the Friedrichs extension of $L.$ 
It is standard to show that $ (\overline{\cal E} , \overline \DDD ) $ is a Dirichlet form. 

Assumption \ref{regular} is clearly satisfied, since it is already satisfied for $ ( {\cal E}, \DDD ( L)).$ 
Condition (\ref{eq:constant}) is also satisfied. 

Moreover, $ (\overline{\cal E} , \overline \DDD ) $ is local. This can be seen as follows. 
By remark 5.1.5 of \cite{BH}, it is sufficient to show that for all $f \in C_c^\infty (\R^d ) $ and for $F , G \in C_c^\infty (\R) $ such that $supp (F) \cap supp (G) = \emptyset , $ we have that 
$$ {\cal E} ( F_0 ( f), G_0 (f) ) = 0 ,$$ 
where $ F_0 = F - F(0).$ This follows immediately from 
\begin{multline*}
 \Gamma ( F_0 ( f), G_0 ( f)) = \sum_i \left( \sum_k X_i^k \partial_k ( F\circ f - F(0)) (\sum_k X_i^k \partial_k ( G \circ f - G(0))\right) \\
= \sum_i \sum_k \sum_l X_i^k X_i^l F' (f) \partial_k f \; G' (f) \partial_l f = 0 ,
\end{multline*}
since $ F' \cdot G' \equiv 0$ due to the disjoint supports of $F$ and $G.$ 

Finally, we also have that  $1 \in \overline{\DDD }$ and that $\overline{ \cal E} (1,1) = 0 $ which follows by standard arguments.  

To conclude, all assumptions of Theorem \ref{theo:oufouf} are satisfied except the first one on integrability of hitting times (for the local Poincar\'e inequality, recall remark \ref{tobecited}.2). 

Condition 1. of Theorem \ref{theo:oufouf} is classically implied by a Lyapounov type condition which is related to the rate of divergence of the function { $\nabla V(x) \to - \infty $ }as $|x| \to \infty .$ 
In order to give an explicit example take $d=n$ and $X_i^k=\frac1{\sqrt 2}\delta_i^k.$ In this case
$$Lf=\frac12\Delta f+\frac12\nabla V\nabla f.$$
When identifying with the classical form
$$ Lf = \frac12 \sum_{j,k} a_{j,k} \partial_j \partial_k f + \sum_k b_k \partial_k f $$
we find $b=\frac12\nabla V.$ 
Suppose that for some $r>\frac d2+1+l,$ and $M>0$, 
$$V(x)=-2r\ln |x|,\quad|x|>M.$$ 
Then it is easy to check that Veretennikov's condition (see Veretennikov (1997), \cite{Veret})
$$<b(x),x/|x|>\leq -r/|x|,\quad |x|\geq M,$$
is fulfilled. Under this condition for $A>M,$ $\tau=\inf\{t\geq 0;\ |X_t|\leq A\}$, any  $x\in\R^d$ and some $\varepsilon >0$ $\E_x\tau^{l+1}\leq C(1+|x|^{2l+2+\varepsilon}),$ \cite{Veret} (Theorem 3) and  $\E_{\mu}\tau^{l+1}<\infty.$\\
On the other side, Theorem 3 of Balaji and Ramasubramanian (2000), \cite{indiens}, with $A(x)=1, B(x)=d, C(x)=-2r$ shows that for all $p>r-d/2+1$ and $|x|>A$, $\E_x\tau^{p}=\infty.$

\section{Polynomial moments under Nash inequality.}
In Section \ref{Section3}, we have shown that  for diffusions, the existence of moments implies Nash inequality. 

We now address the inverse question: Does Nash inequality imply the existence of moments? The answer is yes, at least 
if the functional $\Phi $ satisfies (\ref{eq:last}). 

All statements of this section hold true under the general conditions of Section \ref{Section2}, for
a conservative Hunt process which is $\mu-$symmetric, with $\mu$ a probability measure. Let $l > 0 .$ 

\begin{theo}\label{theo:nice}
Suppose that Nash inequality holds with $ p= \frac{l+2}{l+1} $ and with $\Phi $ such that (\ref{eq:last}) holds.  Then for all $\varepsilon > 0 $
and for any open set $\G $ such that $\mu (\G^c) > 0 ,$  
$$ x \to \E_x \tau_{\G}^{l+1 - \varepsilon} \in \mbox{{ $ \mathbb{L}^1  $}} ( \I_{\G } \cdot \mu (dx) ).$$
\end{theo}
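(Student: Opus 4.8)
The plan is to reduce the stated $\mathbb{L}^1$-integrability to a tail estimate for $\tau_\G$ under $\P_\mu$, and then to obtain that tail estimate from a polynomial deviation inequality, following the route announced in the introduction: Nash inequality $\to$ mixing rate $\to$ deviation bound $\to$ tail of $\tau_\G$. Since $\tau_\G = 0 $ on $\{X_0 \in \G^c\}$, one has
\[
\int_\G \E_x \tau_\G^{l+1-\varepsilon}\, \mu(dx) = \E_\mu \tau_\G^{l+1-\varepsilon} = (l+1-\varepsilon)\int_0^\infty t^{l-\varepsilon}\, \P_\mu(\tau_\G > t)\, dt ,
\]
so it suffices to prove $\P_\mu(\tau_\G > t) \le C t^{-\beta}$ for some $\beta > l+1-\varepsilon$. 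To manufacture a deviation event, set $g = \I_{\G^c}$, which is bounded with $\mu(g) = \mu(\G^c) > 0$. On $\{\tau_\G > t\}$ the trajectory stays in $\G$ on $[0,t)$, so $\int_0^t g(X_s)\, ds = 0$, and therefore
\[
\{\tau_\G > t\} \subseteq \Big\{ \tfrac1t \int_0^t g(X_s)\, ds - \mu(g) \le -\tfrac12 \mu(g) \Big\} .
\]
The problem is thus reduced to a lower-deviation bound for the additive functional of the bounded observable $g$.

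Next I would quantify the mixing of $X$ from the Nash inequality. By Liggett's equivalence \cite{Lig} (inequality (\ref{eq: sgconv}) with $q = l+2$), the hypothesis gives $\|P_t f - \mu(f)\|_2^2 \le C \Phi(f) t^{-(l+1)}$, while (\ref{eq:last}) yields $\Phi(f) \le C \|f - \mu(f)\|_\infty^2$. Using $\mu$-symmetry and the semigroup property, for bounded $f, h$,
\[
|\mathrm{Cov}_\mu(f(X_0), h(X_t))| = |(P_{t/2}(f - \mu(f)), P_{t/2}(h - \mu(h)))| \le \|P_{t/2}(f - \mu(f))\|_2\, \|P_{t/2}(h - \mu(h))\|_2 ,
\]
and bounding each factor through the convergence estimate gives $|\mathrm{Cov}_\mu(f(X_0), h(X_t))| \le C \|f - \mu(f)\|_\infty \|h - \mu(h)\|_\infty\, t^{-(l+1)}$. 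Specialized to indicators, this shows that the $\alpha$-mixing coefficient of the stationary process satisfies $\alpha(t) \le C t^{-(l+1)}$.

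With this polynomial mixing rate in hand, I would invoke the deviation inequality of Cattiaux--Guillin \cite{CG}, which for a stationary process whose $\alpha$-mixing coefficient decays like $t^{-v}$ and a bounded observable of zero mean produces a lower-deviation bound whose order approaches $v$. Concretely, the $2k$-th moment of $\int_0^t (g - \mu(g))(X_s)\, ds$ is of order $t^{k}$ for every admissible $k < l+1$, and Markov's inequality then gives $\P_\mu(\tfrac1t \int_0^t g\, ds - \mu(g) \le -\tfrac12 \mu(g)) \le C(k)\, t^{-k}$. Choosing $k = l+1 - \varepsilon'$ with $0 < \varepsilon' < \varepsilon$ yields the tail $\P_\mu(\tau_\G > t) \le C_\varepsilon\, t^{-(l+1-\varepsilon')}$, which makes the integral above converge and proves the claim.

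The main obstacle is precisely the passage from second-order information (the covariance, equivalently the variance) to a deviation bound of near-optimal polynomial order. A plain Chebyshev estimate based on the asymptotic variance only delivers a $t^{-1}$ tail, far too weak; one must instead control all moments of order approaching $2(l+1)$ and keep careful track of how the mixing exponent constrains the admissible moment order. It is exactly this constraint --- the mixing rate $l+1$ forcing the usable exponent $k$ strictly below $l+1$ --- that produces the unavoidable loss of $\varepsilon$ in the conclusion.
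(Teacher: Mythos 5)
Your proposal is correct and follows essentially the same route as the paper: reduction of the moment to a tail bound for $\P_\mu(\tau_\G>t)$ via the observable $\I_{\G^c}$, extraction of the $\alpha$-mixing rate $t^{-(l+1)}$ from Liggett's equivalence together with (\ref{eq:last}), and a Rio-type deviation inequality for the additive functional, exactly as in the paper's Proposition and the concluding argument. The only (harmless) difference is that the paper uses Rio's Fuk--Nagaev inequality (6.19b) to obtain the sharp tail $C t^{-(l+1)}$ and loses the $\varepsilon$ only when integrating $\int_1^\infty t^{l-\varepsilon}\P_\mu(\tau_\G>t)\,dt$, whereas you use Rosenthal-type moment bounds plus Markov, which already concede an $\varepsilon'$ in the tail exponent but still suffice for the stated conclusion.
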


The idea of the proof is not new and follows ideas exposed in section 3 of \cite{CG}. 

In the following, $C$ denotes a constant that might change from occurrence to occurrence. For integrable $f$ we write  $\tilde f = f - \mu (f) .$ By \cite{Lig}, we know that under the conditions of Theorem \ref{theo:nice}, 
$$ Var_\mu  ( P_t f ) \le C t^{ - (l+1 )} \Phi (f) \le  C  t^{ - (l+1)}  \mbox{ { $Osc (f)^2$ }} .$$ 
{It can be easily seen that this} implies that the stationary process $X_t $ under $\P_\mu$ is strongly mixing, and by symmetry, its mixing coefficient is bounded by 
$$ \alpha (t) \le C \left( \frac{t}{2}\right)^{- (l+1)}  
= C t^{- (l+1)} .$$

The first step of the proof of Theorem \ref{theo:nice} is the following deviation inequality. 
\begin{prop}
Fix {$t \geq 1   $} and let $V$ be such that $\| V\|_\infty = 1.$ Then 
$$ \P_\mu \left(  \left| \frac1t\int_0^t V(X_s) ds -   \mu(V) \right| \geq 4 \lambda \right) \le  C \left[ \lambda^{-{(l+2)}} \vee \lambda^{ - 2 (l+1)} \right] t^{- (l+1)} .$$
\end{prop}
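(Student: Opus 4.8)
The plan is to control the deviation of the time-average $\frac1t\int_0^t V(X_s)\,ds$ from its mean $\mu(V)$ by combining a Chebyshev-type bound on the variance of the time-average with the strong mixing rate $\alpha(t)\le Ct^{-(l+1)}$ already established. Without loss of generality I take $\mu(V)=0$ (replace $V$ by $\tilde V=V-\mu(V)$, which still satisfies $\|\tilde V\|_\infty\le 2$), and I write $M_t=\int_0^t V(X_s)\,ds$. The quantity to estimate is $\P_\mu(|M_t|\ge 4\lambda t)$.

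First I would compute the variance of $M_t$ under $\P_\mu$. By stationarity,
\begin{equation*}
\Var_\mu(M_t)=\int_0^t\int_0^t \E_\mu\bigl[\tilde V(X_s)\tilde V(X_u)\bigr]\,ds\,du
=2\int_0^t\!\!\int_s^t \E_\mu\bigl[\tilde V(X_s)\tilde V(X_u)\bigr]\,du\,ds.
\end{equation*}
The stationary covariance $\E_\mu[\tilde V(X_s)\tilde V(X_u)]$ is exactly $\E_\mu[\tilde V(X_0)P_{u-s}\tilde V(X_0)]=(\, \tilde V,P_{u-s}\tilde V\,)_{\mathbb{L}^2(\mu)}$, which by symmetry equals $\Var_\mu$-type inner products controlled by the Nash-induced decay. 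Here the two competing bounds enter: the covariance is bounded both by the trivial estimate $\|\tilde V\|_\infty^2\le C$ (valid for small lags) and, for large lags $u-s$, by the convergence rate $(u-s)^{-(l+1)}\Phi(V)\le C(u-s)^{-(l+1)}Osc(V)^2$ coming from Liggett's equivalence $\|P_tf-\mu(f)\|_2^2\le C\Phi(f)t^{-(l+1)}$. Splitting the double integral at a lag threshold and integrating each piece should yield a bound on $\Var_\mu(M_t)$ of order $t$ times a slowly varying factor, and applying Chebyshev's inequality gives the $\lambda^{-2(l+1)}t^{-(l+1)}$ branch.

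The second, complementary branch $\lambda^{-(l+2)}t^{-(l+1)}$ should come from a direct use of the mixing coefficient $\alpha(t)$ rather than the variance. The standard device (as in Cattiaux–Guillin \cite{CG}) is to discretize time, write $M_t$ as a sum of block integrals over intervals of a well-chosen length, and apply a covariance inequality for $\alpha$-mixing sequences (the Rio/Davydov-type bound) to the resulting stationary dependent sequence; the blocks are then treated with a moment inequality tuned to the polynomial mixing rate $\alpha(t)\le Ct^{-(l+1)}$. Optimizing the block length and the order of the moment inequality against $\lambda$ produces the $\lambda^{-(l+2)}$ dependence. Taking the maximum of the two regimes accounts for the $\lambda^{-(l+2)}\vee\lambda^{-2(l+1)}$ in the statement.

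The main obstacle I anticipate is bookkeeping the interplay between the two tail regimes so that the single bound $C[\lambda^{-(l+2)}\vee\lambda^{-2(l+1)}]t^{-(l+1)}$ emerges cleanly, rather than two separate inequalities with mismatched powers of $t$. In particular, one must check that the $t$-dependence reduces to exactly $t^{-(l+1)}$ in both branches after the block-length optimization, which requires that the mixing rate and the variance decay be consistent — they are, since both trace back to the same convergence rate $t^{-(l+1)}$ of the semigroup. A secondary technical point is justifying the passage from the $\mathbb{L}^2(\mu)$ decay of $P_t\tilde V$ to a pointwise covariance bound valid inside the double integral; since $V$ is bounded this is handled by the estimate $|(\tilde V,P_r\tilde V)|\le\|\tilde V\|_2\|P_r\tilde V\|_2$ together with the Nash rate, and the boundedness of $V$ supplies the competing uniform bound for small $r$.
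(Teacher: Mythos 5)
Your second branch is essentially the paper's argument: discretize into $n=[t]$ blocks $Y_k=\int_{(k-1)t/n}^{kt/n}\tilde V(X_s)\,ds$, observe that this is a stationary strongly mixing sequence with $\bar\alpha(k)\le C(k-1)^{-(l+1)}$, and apply Rio's Fuk--Nagaev type inequality (6.19b in \cite{Rio}) with the parameter $a=l+1$; the covariance series is summable because $l>0$, giving $s_n^2\le Cn$, and the second term of Rio's bound yields $\lambda^{-(l+2)}n^{-(l+1)}\le C\lambda^{-(l+2)}t^{-(l+1)}$. No block-length optimization is needed: blocks of length $t/n\in[1,2]$ suffice.

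The genuine gap is in your first branch. You propose to obtain the $\lambda^{-2(l+1)}t^{-(l+1)}$ term from the variance bound $\Var_\mu(M_t)\le Ct$ followed by Chebyshev's inequality. But Chebyshev gives
$$\P_\mu\bigl(|M_t|\ge 4\lambda t\bigr)\le \frac{Ct}{16\lambda^2t^2}=C'\lambda^{-2}t^{-1},$$
which for $l>0$ is strictly weaker than $t^{-(l+1)}$ in the time variable and does not produce the power $\lambda^{-2(l+1)}$ either. A second-moment argument cannot reach the rate $t^{-(l+1)}$; one needs a moment of order $2(l+1)$, or equivalently the first (``sub-Gaussian-like'') term of Rio's inequality, namely $4\bigl(1+\frac{\lambda^2n^2}{4rs_n^2}\bigr)^{-r/2}$, evaluated with the free parameter $r=2(l+1)$. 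That is exactly how the paper proceeds: a single application of Rio's inequality delivers both branches simultaneously, with the variance estimate $s_n^2\le Cn$ entering only as an ingredient inside that inequality, not through Chebyshev. As written, your plan proves a strictly weaker deviation bound for the $\lambda^{-2(l+1)}$ regime and hence would not yield moments of $\tau_\G$ of order close to $l+1$ in the proof of Theorem \ref{theo:nice}.
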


\begin{proof}
We mimic the proof of Proposition 4.5 of \cite{CG}, trying to loose less by time discretization. First of all, we make use of moment bounds for sums of strongly mixing sequences obtained by Rio in \cite{Rio}. Let $n = [t]$ be the integer part of $t.$ Then 
$$ \int_0^t \tilde V (X_s) ds = \sum_{ k = 1}^n Y_k  , $$
where 
$$ Y_k = \int_{(k-1) t/n}^{k t/n} \tilde V (X_s) ds .$$ 
Then $(Y_j)$ is a $\P_\mu -$stationary sequence of strongly mixing centered random variables, {bounded by $ |Y_j | \le 2  \frac{t}{n},$} with mixing coefficient 
\begin{equation}\label{eq:mixingbar}
\bar \alpha (0 ) = \frac12,\  \bar \alpha (k ) = \alpha \left( (k-1) \frac tn\right) \le C (k-1)^{- (l+1)} \left( \frac tn \right)^{- (l+1)} \le C (k-1)^{- (l+1)} , 
\end{equation}
since $ t/n \geq 1 .$ 
We write $ \tilde Y_j = Y_j / ( 2t/n) $ and apply the inequality (6.19b) of \cite{Rio} to $ S_n = \sum_{k= 1}^n \tilde Y_k ,$ with $ a = l+1 .$ So we obtain for any $ r \geq 1,$ 
\begin{multline*}
 \P_\mu  \left(  \left| \frac1t\int_0^t V(X_s) ds -   \mu(V) \right| \geq 4 \lambda \right) =\\
= \P_\mu \left( \left| \sum_{k=1}^n Y_k\right| \geq 4 \lambda t \right ) 
= \P_\mu \left( | S_n| \geq 4 (\lambda n /2)   \right ) \\
\le 4 \left( 1 + \frac{\lambda^2 n^2}{ 4 r s_n^2 } \right)^{- r/2}+ 
2 n c r^{-1} \left( \frac{4 r}{\lambda n }\right)^{l+2} .
\end{multline*} 
Here, 
$$ s_n^2 = \sum_{i=1}^n \sum_{j = 1}^n | Cov (\tilde Y_i, \tilde Y_j) | .$$ 
We have to control this {sum of covariances} $s_n^2.$
Using Corollaire 1.1. of \cite {Rio} we have
$$s_n^2\leq 4\sum_{k=1}^n\int_0^1[\alpha^{-1}(u) \wedge n ] Q_k^2(u)du,$$
where 
$$a^{-1}(u)=\inf\{k\in\N;\; \bar{\alpha}(k)\leq u\}=\sum_{i\geq 0}\1_{\bar{\alpha}(i)>u}, \quad a^{-1}(u) \wedge n = \sum_{i=0}^{n-1} \1_{\bar{\alpha}(i)>u},$$ 
and $Q_k(u)$ is the inverse function of $H_{\tilde {Y}_k}(t)=\P(|\tilde {Y}_k|>t).$
Since $|\tilde Y_k|\leq 1$ for all $k\leq n,$  
$Q_k^2(u)\leq 1$ and thus (see \cite{Rio}, page 15),
$$s_n^2\leq 4n\int_0^1[a^{-1}(u)\wedge n]  du\leq 4 n \sum_{i=0}^\infty \bar{\alpha}(i) .$$
Since $l>0, $ this last series converges (compare to (\ref{eq:mixingbar})), and we obtain
$$ s_n^2 \le C  n $$
for some constant $C > 0 .$ So finally, 
\begin{multline*}
 \P_\mu  \left(  | \frac1t \int_0^t V(X_s) ds -   \mu(V) | \geq 4 \lambda \right) 
\le\\
\le  4 \left( 1 + \frac{\lambda^2 n^2}{4 C r  n  } \right)^{- r/2} + 
2 n c r^{-1} ( \frac{4 r}{\lambda n })^{l+2}\\
\le  4 \left(  \frac{\lambda^2 n}{C r    } \right)^{- r/2} + 
2  c r^{-1} ( \frac{4 r}{\lambda  })^{l+2} n^{ - (l+1)}  .
\end{multline*}
Finally we choose $r = 2 (l+ 1)$ and use that 
$$ n^{- (l+1) } = \left( \frac{t}{n}\right)^{ l+1} t^{- (l+1)} \le 2^{l+1} t^{ - (l+1)} ,$$
for $ t \geq 1, $ since $ n = [t] $ and $t \geq 1 $ which implies $ t/n \le 2 .$ Thus we get the result.  
\end{proof}

\noindent
{\it Proof of Theorem \ref{theo:nice}}
We apply the above deviation inequality with $V = 1_{ \G^c} .$ Then 
$$\{ \tau_{\G } > t \} \subset \left\{ \frac1t \int_0^t V(X_s) ds = 0\right\} 
\subset \left\{  \frac1t | \int_0^t \tilde V(X_s) ds  | \geq \mu( \G^c) \right\} .$$
Hence
$$ \int \!\! 1_{ \G} (x) \mu (dx) \P_x ( \tau_{\G } > t ) 
\le \P_\mu \left( \frac1t | \int_0^t \tilde V(X_s) ds  | \geq \mu( \G^c)  \right) 
\le C t^{ - (l+1)} ,$$
whence for every $\varepsilon > 0 $ ``small''  
$$ \E_x \tau_{\G}^{l+1 - \varepsilon} \in \mbox{{ $ \mathbb{L}^1 $}} ( \I_{\G} \cdot \mu (dx)).$$

\def\refname{References}

\end{document}